\newtheorem{theorem}{Theorem}[section] 
\newtheorem{lemma}[theorem]{Lemma}     
\newtheorem{corollary}[theorem]{Corollary}
\newtheorem{proposition}[theorem]{Proposition}
\newtheorem{remark}[theorem]{Remark}
\newtheorem{conjecture}[theorem]{Conjecture}
\newcommand{\fX}{\mathfrak{X}}
\newcommand{\C}{\mathbb{C}}
\newcommand{\hm}{\mathrm{Hom}}
\newcommand{\quot}{/\!\!/}
\newcommand{\bG}{\mathbf{G}}
\newcommand{\R}{\mathbb{R}}
\newcommand{\Z}{\mathbb{Z}}
\newcommand{\bbR}{\mathbb{R}}
\newcommand{\bbZ}{\mathbb{Z}}
\newcommand{\bbF}{\mathbb{F}}
\newcommand{\bbC}{\mathbb{C}}
\newcommand{\codim}{\mathrm{codim}}
\def\co{\colon\thinspace}
\newcommand{\wt}[1]{\widetilde{#1}}
\newcommand{\cross}{\times}
\newcommand{\homeo}{\cong}
\newcommand{\injects}{\hookrightarrow}
\newcommand{\srm}[1]{\stackrel{#1}{\maps}}
\newcommand{\maps}{\longrightarrow}
\title[Fundamental Groups of Character Varieties]{Fundamental Groups of Character 
Varieties: Surfaces and Tori}
\date{\today}
\author[I. Biswas]{Indranil Biswas}
\address{School of Mathematics, Tata Institute of Fundamental
Research, Homi Bhabha Road, Bombay 400005, India}
\email{indranil@math.tifr.res.in}
\author[S. Lawton]{Sean Lawton}
\address{Department of Mathematical Sciences, George Mason University, 4400 
University Drive, Fairfax, Virginia 22030, USA}
\email{slawton3@gmu.edu}
\author[D. Ramras]{Daniel Ramras}
\address{Department of Mathematical Sciences, Indiana University-Purdue University 
Indianapolis, 402 N. Blackford, LD 270, Indianapolis, IN 46202, USA}
\email{dramras@math.iupui.edu}
\subjclass[2010]{14D20, 14L30, 14F35}
\keywords{character variety, moduli space, fundamental group, Yang-Mills theory}
\thanks{The first named author is supported by the J. C. Bose Fellowship. The second 
named author was partially supported by the Simons Foundation, USA (\#245642) and the 
National Science Foundation, USA (\#1309376).  The third named author was partially 
supported by the Simons Foundation, USA (\#279007).}
\begin{document}

\begin{abstract}
We compute the fundamental group of moduli spaces of Lie group valued representations 
of surface and torus groups.
\end{abstract}

\maketitle

\section{Introduction}

Let $\bG$ be the $\C$-points of a reductive affine algebraic group defined over $\R$ ({\it reductive $\C$-group} for 
short).  We will say a Zariski dense subgroup $G\subset \bG$ is {\it real reductive} if
$\bG(\R)_0\subset G\subset \bG(\R)$, where $\bG(\R)_0$ denotes the connected component of 
the real points of $\bG$ containing the identity element. Let $\Gamma$ be a finitely 
presented discrete group.  Then $G$ acts on the analytic
variety $\hm(\Gamma, G)$ by conjugation (via the adjoint action of $G$ on itself).  
Let $\hm(\Gamma, G)^*\,\subset\,\hm(\Gamma, G)$ be the subspace with closed orbits. Then
the quotient space $\fX_{\Gamma}(G)\,:=\,\hm(\Gamma,G)^*/G$ for the adjoint action is 
called the $G$-character variety of $\Gamma$ (or the moduli space of $G$-representations of
$\Gamma$). By \cite{RiSl}, $\fX_{\Gamma}(G)$ is a Hausdorff space and when $G$ is
real algebraic it is moreover semi-algebraic. Therefore, $\fX_{\Gamma}(G)$
deformation retracts to a compact subspace, which in turn implies
that the fundamental group of $\fX_{\Gamma}(G)$ is finitely generated.  

The spaces $\fX_\Gamma(G)$ constitute a large class of affine varieties 
(see \cite{KaMi, Rap}), and are of central interest in the study of moduli spaces (see \cite{A-B, GGM, Hitchin, NS, Simpson1, Simpson2}).  Additionally they naturally arise in mathematical physics (see \cite{Borel-Friedman-Morgan,Kac-Smilga,Witten}), the study of geometric structures (see \cite{CG2, GoldmanGStructure, Thurston}), knot theory (see \cite{CCGLS}), and even in Geometric Langlands (see \cite{HaTh,KW}).  

Extensive attention over the years has been given to the study of the 
connected components of these 
spaces (see \cite{Goldman-components, Ho-Liu-ctd-comp-I, Ho-Liu-ctd-comp-II, 
Lawton-Ramras}), and recently the systematic study of their fundamental group has begun (see 
\cite{BiLa, Lawton-Ramras}).  In this paper, we further these results to 
the important cases of (orientable) surface groups and free Abelian (torus) groups.

Let $\fX^0_\Gamma (G)$ denote the connected component of the trivial representation in $\fX_\Gamma (G)$.  Here is our main theorem:

\begin{theorem}\label{thm:main}
Let $G$ be either a connected reductive $\C$-group, or a connected compact Lie group, and
let $\Gamma$ be one the following:
\begin{enumerate}
\item a free group,

\item a free Abelian group, or

\item the fundamental group of a closed orientable surface.
\end{enumerate}  
Then 
$\pi_1(\fX^0_{\Gamma}(G))\,=\,\pi_1(G/[G,G])^{r},$ where 
$r\,=\,\mathrm{Rank}\left(\Gamma/[\Gamma,\Gamma]\right)$.
\end{theorem}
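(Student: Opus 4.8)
The plan is to prove Theorem~\ref{thm:main} in two reduction stages followed by an application of Armstrong's theorem on fundamental groups of quotients by properly discontinuous group actions. First, I would reduce to the compact case: since $\fX_\Gamma(G)$ is homotopy equivalent to $\fX_\Gamma(K)$ for $K\subset G$ a maximal compact subgroup (Florentino--Lawton), and $K$ is a connected compact Lie group whose abelianization $K/[K,K]$ is a torus that is homotopy equivalent to $G/[G,G]$, it suffices to prove $\pi_1(\fX^0_\Gamma(K)) \cong \pi_1(K/[K,K])^r$ when $G=K$ is compact connected. (When $G$ is already compact this step is vacuous.)

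For the second reduction I would present $K$ as a finite central quotient of a product of a torus with a simply connected semisimple group. Concretely, write $[K,K]=\tilde K_{\mathrm{ss}}/C_{\mathrm{ss}}$ with $\tilde K_{\mathrm{ss}}$ simply connected, put $N=\dim Z(K)^\circ$, and set $\tilde K=\tilde K_{\mathrm{ss}}\times (S^1)^N$, so that $K=\tilde K/C$ for a finite $C\subset Z(\tilde K)$. Because $\Gamma^{\mathrm{ab}}\cong\Z^r$ is torsion free we have $\hm(\Gamma,C)\cong C^r$, and $\hm(\Gamma,\tilde K)$ is connected in each of the three cases. Lifting paths based at the trivial representation shows $\hm(\Gamma,\tilde K)$ surjects onto the identity component of $\hm(\Gamma,K)$ with fibers the orbits of the pointwise central multiplication action of $C^r$; since $K$ is compact all orbits are closed, so passing to character varieties yields a homeomorphism
\[
\fX^0_\Gamma(K)\;\cong\;\bigl(\fX_\Gamma(\tilde K_{\mathrm{ss}})\times ((S^1)^N)^r\bigr)\big/ C^r ,
\]
where $C^r$ acts diagonally, by central twisting on $\fX_\Gamma(\tilde K_{\mathrm{ss}})$ and by translation by a torsion element on $((S^1)^N)^r$. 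This is exactly the covering-space formalism of \cite{Lawton-Ramras}; the main point needing care here is that these are singular semialgebraic spaces, so one must invoke the versions of path lifting and of Armstrong's theorem valid in that category rather than for manifolds.

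The key geometric input is that $\fX_\Gamma(\tilde K_{\mathrm{ss}})$ is \emph{simply connected}. For $\Gamma=\Z^r$ this space equals $T^r/W$ (a commuting tuple in a simply connected compact group lies in a common maximal torus), and simple connectedness follows from Armstrong's theorem applied to the action of $\pi_1(T^r)\rtimes W$ on the universal cover $\mathfrak t^r$: the reflections in $W$ fix the origin, while a Coxeter element $c$ — for which $c-1$ is invertible on $\mathfrak t$ — paired with an arbitrary lattice translation lies in a point stabilizer, so the subgroup generated by elements with fixed points is all of $\pi_1(T^r)\rtimes W$. For $\Gamma$ free (where the space is $\tilde K_{\mathrm{ss}}^{\,n}/\tilde K_{\mathrm{ss}}$) and for $\Gamma$ a surface group (where it is the moduli space of flat $\tilde K_{\mathrm{ss}}$-connections) this is the content of \cite{BiLa}; in the surface case it rests on Yang--Mills Morse theory in the manner of Atiyah--Bott, and I expect this to be the deepest single ingredient in the whole proof.

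Granting this, I would finish with a second application of Armstrong's theorem. The universal cover of $\fX_\Gamma(\tilde K_{\mathrm{ss}})\times((S^1)^N)^r$ is $\fX_\Gamma(\tilde K_{\mathrm{ss}})\times\R^{Nr}$, which is simply connected, so $\pi_1(\fX^0_\Gamma(K))$ is the quotient of the deck group $\tilde G$ — an abelian extension of $C^r$ by the lattice $\Z^{Nr}$ of translations — by the subgroup $\tilde G^{\ast}$ generated by elements having a fixed point. An element of $\tilde G$ acts as (a central twist of $\fX_\Gamma(\tilde K_{\mathrm{ss}})$) $\times$ (a translation of $\R^{Nr}$), hence has a fixed point exactly when the translation is trivial and the twist fixes a point; and here the Coxeter-element trick reappears: given $\psi\co\Gamma\to Z(\tilde K_{\mathrm{ss}})$, choosing $t_i\in T$ with $c(t_i)t_i^{-1}=\psi(x_i)$ (possible since $c-1$ is invertible on $\mathfrak t$, so $t\mapsto c(t)t^{-1}$ is onto $T$) produces a torus representation $\rho$ with $\psi\cdot\rho=n_c\rho n_c^{-1}$, so $[\rho]$ is a fixed point. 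Thus every twist fixes a point, $\tilde G^{\ast}$ is precisely the kernel of the homomorphism $\tilde G\to\R^{Nr}$ recording the translation part, and $\pi_1(\fX^0_\Gamma(K))\cong\tilde G/\tilde G^{\ast}$ is the image of that homomorphism, a lattice isomorphic to $\Z^{Nr}=\pi_1(K/[K,K])^r$, as desired.
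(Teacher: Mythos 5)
Your strategy --- present $K$ as a finite central quotient of $\wt{K}_{ss}\times (S^1)^N$ and compute $\pi_1$ of the resulting quotient via Armstrong's theorem, with the Coxeter-element trick showing that every central twist fixes a point --- is genuinely different from the paper's route (a Serre fibration $\fX^0_\Gamma(DG)\hookrightarrow\fX^0_\Gamma(G)\twoheadrightarrow\fX_\Gamma(G/DG)$, its long exact sequence, and a ``factor-inclusion'' argument reducing everything to simple connectivity of $\fX^0_\Gamma(DG)$), and the Armstrong/Coxeter computation at the end is correct in outline. But there are two genuine gaps. The first is your opening reduction to the compact case: the homotopy equivalences $\fX_\Gamma(G)\simeq\fX_\Gamma(K)$ of Casimiro--Florentino--Lawton--Oliveira and Florentino--Lawton are established only for $\Gamma$ free or free Abelian. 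For closed surface groups no such equivalence is available (and via non-abelian Hodge theory one expects $\fX_\Gamma(G)$ to retract onto the nilpotent cone of the Higgs moduli space rather than onto $\fX_\Gamma(K)$), so case (3) with $G$ a complex reductive group is simply not reached by your argument. The paper instead treats the complex surface-group case directly, quoting Li's theorem that $\hm(\Gamma,G)$ is simply connected for $G$ complex semisimple and simply connected, in parallel with an Atiyah--Bott/Yang--Mills argument in the compact case.

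The second gap is your claim that $\hm(\Gamma,\wt{K})$ is connected ``in each of the three cases,'' together with the parenthetical that a commuting tuple in a simply connected compact group lies in a common maximal torus. Both statements are false for $\Gamma=\Z^r$ with $r\ge 3$: by Borel--Friedman--Morgan and Kac--Smilga (cited in the paper's bibliography), $\hm(\Z^3,H)$ is disconnected for simply connected simple $H$ not of type $A$ or $C$ (e.g.\ $\mathrm{Spin}(7)$, $G_2$, $E_8$), and only the \emph{identity component} consists of tuples conjugate into a maximal torus. Consequently your displayed homeomorphism is wrong as written: the left factor must be $\fX^0_\Gamma(\wt{K}_{ss})=\wt{T}^r/W$ rather than all of $\fX_\Gamma(\wt{K}_{ss})$, and one must check that the $C^r$-action preserves that component. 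This is repairable --- the paper carries the superscript $0$ throughout precisely for this reason --- but as stated your ``key geometric input'' fails in case (2). Finally, note that the simple connectivity of $\fX_\Gamma(\wt{K}_{ss})$ for surface groups is not in \cite{BiLa} (which treats free groups); it is exactly the paper's Yang--Mills lemma, proved via the codimension estimate for the non-semistable Harder--Narasimhan strata together with connectivity of the based gauge group. You have correctly identified this as the deepest ingredient, but your proposal does not supply it.
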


From \cite{CFLO} and \cite{FlLaAbelian} we know that $\fX_\Gamma(G)$ and $\fX_\Gamma(K)$ 
are homotopy equivalent whenever $\Gamma$ is free (non-Abelian), or free Abelian, with $G$ being 
a real reductive Lie group with maximal compact subgroup $K$. This homotopy equivalence restricts to a homotopy equivalence $\fX^0_\Gamma (G) \simeq \fX^0_\Gamma (K)$, so from Theorem \ref{thm:main}, 
we then conclude:

\begin{corollary}
When $G$ is real reductive and $\Gamma$ is free non-Abelian or free Abelian, 
then $\pi_1(\fX^0_{\Gamma}(G))\,=\,\pi_1(K/[K,K])^{r},$ where 
$r\,=\,\mathrm{Rank}\left(\Gamma/[\Gamma,\Gamma]\right)$ and $K$ is a maximal compact 
subgroup of $G$.
\end{corollary}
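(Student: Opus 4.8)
The paragraph preceding the Corollary already lists the two ingredients, so the plan is just to make the deduction from Theorem~\ref{thm:main} explicit: reduce the real reductive case to the connected compact case via the homotopy equivalence between the $G$-character variety and the $K$-character variety. Concretely, the first step is to invoke \cite{CFLO} (when $\Gamma$ is free non-Abelian) and \cite{FlLaAbelian} (when $\Gamma$ is free Abelian), each of which supplies a homotopy equivalence $\fX_\Gamma(G)\simeq\fX_\Gamma(K)$, where $K$ is a maximal compact subgroup of the real reductive group $G$. I would quote these results verbatim rather than reprove them.

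The second step is to pass to the distinguished components. The class of the trivial homomorphism $\Gamma\to G$ lies in $\fX_\Gamma(G)$, and its image under the equivalence is path-connected to the class of the trivial homomorphism $\Gamma\to K$, because the deformations constructed in \cite{CFLO} and \cite{FlLaAbelian} preserve this class; since a homotopy equivalence restricts to a homotopy equivalence between the path component of any point and the path component of its image, we obtain $\fX^0_\Gamma(G)\simeq\fX^0_\Gamma(K)$. This is exactly the restricted equivalence already recorded in the text, so in the final write-up this step amounts to a citation. In particular $\pi_1\bigl(\fX^0_\Gamma(G)\bigr)\cong\pi_1\bigl(\fX^0_\Gamma(K)\bigr)$.

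The third step is to apply Theorem~\ref{thm:main} to the pair $(K,\Gamma)$. Taking $G$, and hence its maximal compact subgroup $K$, to be connected (as in the hypotheses of Theorem~\ref{thm:main}, since a maximal compact subgroup of a connected Lie group is connected), $K$ is a connected compact Lie group, so Theorem~\ref{thm:main} gives $\pi_1\bigl(\fX^0_\Gamma(K)\bigr)=\pi_1\bigl(K/[K,K]\bigr)^{r}$ with $r=\mathrm{Rank}\bigl(\Gamma/[\Gamma,\Gamma]\bigr)$. Combining with the isomorphism of the second step yields $\pi_1\bigl(\fX^0_\Gamma(G)\bigr)=\pi_1\bigl(K/[K,K]\bigr)^{r}$, which is the claim. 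The only point needing any care — minor, since all the mathematical content sits in Theorem~\ref{thm:main} — is the second step: verifying that the homotopy equivalences of \cite{CFLO,FlLaAbelian} genuinely carry the component of the trivial representation to the component of the trivial representation (and, should one wish to allow disconnected real reductive $G$, doing the bookkeeping required to reduce to a connected maximal compact subgroup without disturbing that component). Everything else is formal.
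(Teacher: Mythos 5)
Your proposal is correct and is essentially identical to the paper's own (one-sentence) deduction: the authors likewise cite \cite{CFLO} and \cite{FlLaAbelian} for the homotopy equivalence $\fX_\Gamma(G)\simeq\fX_\Gamma(K)$, note that it restricts to the components of the trivial representation, and then apply Theorem~\ref{thm:main} to the connected compact group $K$. Your remark about checking that the equivalence preserves the trivial component (and about connectedness hypotheses) is a reasonable point of care that the paper itself passes over silently.
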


\begin{remark}
{\rm Many of the cases considered in the above theorem, when $\pi_1(G)$ is torsion-free, 
were handled in \cite{Lawton-Ramras} using general results on covering spaces of character varieties. The case
when $\Gamma$ is free (non-Abelian) was first handled in general in \cite{BiLa}, although
with a different proof.}
\end{remark}

\begin{remark}
{\rm For any connected real reductive Lie group $G$ $($which includes the complex reductive 
and compact cases$)$, $G/[G,G]$ is homotopy equivalent to a geometric torus.  Therefore, in the cases 
considered in this paper, $\pi_1(\fX_{\Gamma}(G))\,\cong\, \Z^N$ for some $N$.}
\end{remark}

\begin{corollary}\label{cor1}
Take $G$ and $\Gamma$ as in Theorem \ref{thm:main}, and impose the extra condition that
$G$ is semisimple. Then $\fX^0_{\Gamma}(G)$ is simply connected.
\end{corollary}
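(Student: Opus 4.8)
The plan is to read this off directly from Theorem \ref{thm:main}. The one structural input needed is that a connected semisimple group $G$, in either sense relevant here (a connected semisimple reductive $\C$-group, or a connected semisimple compact Lie group), satisfies $[G,G]\,=\,G$. For a connected reductive $\C$-group this holds because semisimplicity forces the radical to be trivial, and such a group is the almost-direct product of its derived subgroup with its connected center; for a connected compact Lie group it follows from the analogous fact for semisimple Lie algebras together with connectedness. Hence the abelianization $G/[G,G]$ is the trivial group, so $\pi_1(G/[G,G])\,=\,0$, and Theorem \ref{thm:main} gives $\pi_1(\fX^0_\Gamma(G))\,=\,\pi_1(G/[G,G])^{r}\,=\,0$ for each of the three classes of $\Gamma$ listed there.

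To upgrade this to simple connectivity it suffices to note that $\fX^0_\Gamma(G)$ is path-connected. This is automatic: $\fX_\Gamma(G)$ is semi-algebraic by \cite{RiSl} (the compact groups in question being real algebraic), hence locally path-connected, and any connected component of a locally path-connected space is path-connected. Since $\fX^0_\Gamma(G)$ is by definition a connected component, it is path-connected, and combined with the vanishing of its fundamental group this yields that it is simply connected. The argument presents no genuine obstacle, being a formal consequence of Theorem \ref{thm:main}; the only point that deserves explicit justification is the identity $[G,G]=G$ across both families of groups, which is classical structure theory.
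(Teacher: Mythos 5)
Your proposal is correct and follows exactly the route the paper intends: the corollary is stated without proof as an immediate consequence of Theorem \ref{thm:main}, using that a connected semisimple $G$ satisfies $[G,G]=G$, hence $\pi_1(G/[G,G])^r=0$, together with the fact that $\fX^0_\Gamma(G)$ is by definition a connected (and, being semi-algebraic, locally path-connected, hence path-connected) component. Your explicit attention to the identity $[G,G]=G$ and to path-connectedness is a reasonable fleshing-out of details the paper leaves implicit.
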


\section*{Acknowledgments}
We thank Constantin Teleman and William Goldman for helpful correspondence.

\section{Proof of Main Theorem}

Unless otherwise stated, $G$ is a connected real reductive Lie group. By definition, 
connectedness of $G$ implies that $G$ is the identity component of the real points of 
a reductive $\C$-group.

A continuous map $f\co X\,\to\, Y$ between topological spaces will be called 
$\pi_1$-\textit{surjective} if for each $y\in Y$, there exists $x\in f^{-1} (y)$ such that $f_* \co \pi_1 (X, x) \,\to\, \pi_1 (Y, y)$ is surjective. The following 
lemma appears in \cite{Lawton-Ramras} when $G$ is a reductive $\C$-group.  
However the proof in \cite{Lawton-Ramras} establishes the stronger 
statement below by using results in \cite{RiSl}.  We say that a continuous map $p\co Y\,\to\, Z$ has the (strong) \emph{path-lifting property} if for each path $\alpha\co [0,1] \,\to\, Z$ and each $y\in Y$ with $p(y) = \alpha (0)$, there exists a path $\widetilde{\alpha} \co  [0,1] \,\to\, Y$ such that $\widetilde{\alpha} (0) = y$ and $p\circ \widetilde{\alpha} = \alpha$.  We say that $p$ has the \emph{weak path-lifting property} if for all such $\alpha$ there exists $\widetilde{\alpha}$ such that $p\circ \widetilde{\alpha} = \alpha$.

\begin{lemma}\label{pathlifting}
Let $X$ be a real algebraic subset of an affine space $V$, and let $G$ be a real
reductive Lie group acting linearly on $V$.  Then the projection map $q\co X \to X\quot
G$ has the weak path-lifting property.  Consequently, $q$ is $\pi_1$--surjective when $G$ is connected. 
\end{lemma}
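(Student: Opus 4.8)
The plan is to reduce, via the real Kempf--Ness theory of Richardson--Slodowy \cite{RiSl}, to the corresponding (classical) statement for actions of a \emph{compact} Lie group, and then to deduce $\pi_1$-surjectivity from weak path lifting together with connectivity of the fibers of $q$.

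First I would fix a Cartan decomposition $G = K\exp(\mathfrak p)$ with $K$ a maximal compact subgroup, and choose (following \cite{RiSl}) a $K$-invariant inner product on $V$ compatible with the $G$-action, i.e.\ one in which every element of $\exp(\mathfrak p)$ acts by a positive self-adjoint operator. Let $\mathcal M \subseteq X$ be the set of vectors of least norm in their $G$-orbit; since the squared norm is a polynomial and $G$ is semialgebraic, $\mathcal M$ is a $K$-invariant semialgebraic subset of $X$. The input I would take from \cite{RiSl} is that a $G$-orbit in $X$ is closed if and only if it meets $\mathcal M$, and that the continuous $K$-invariant restriction $q|_{\mathcal M}$ descends to a \emph{homeomorphism} $\mathcal M/K \xrightarrow{\ \sim\ } X\quot G$. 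Thus $q|_{\mathcal M} = \bar q \circ \pi_K$, where $\pi_K\co \mathcal M \to \mathcal M/K$ is the $K$-orbit map and $\bar q$ is a homeomorphism; so to lift a path $\alpha\co [0,1]\to X\quot G$ it suffices to lift $\bar q^{-1}\circ\alpha$ through $\pi_K$ and then compose with the inclusion $\mathcal M \hookrightarrow X$. Note that for the \emph{weak} property there is no constraint on the starting point.

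It remains to see that the orbit map of a compact Lie group acting on a semialgebraic set has the path-lifting property. This is classical; I would prove it by induction on $\dim K$. Given $\alpha\co [0,1] \to \mathcal M/K$, use the slice theorem to cover $[0,1]$ by finitely many subintervals, each of whose image lies in a slice neighborhood $\pi(S_i) \cong S_i/K_{m_i}$ with $K_{m_i}$ a proper (hence, if $\dim K>0$, lower-dimensional or finite) closed subgroup of $K$; lift over each subinterval by the inductive hypothesis, and match the local lifts at the partition points by translating by suitable elements of $K$ (possible because the two local lifts agree there after applying $\pi_K$). The base case of a finite group $F$ acting on a semialgebraic set $S$ is handled by lifting over the dense open locus where $F$ acts freely --- a covering space, so paths over an interval lift --- and extending continuously across the lower-dimensional non-free locus, using e.g.\ the curve selection lemma to see that the limits exist. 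This establishes weak path lifting for $q$. I expect this compact-group step, in particular the gluing of local lifts across orbit-type strata for non-free actions, to be the main technical point.

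Finally, assume $G$ is connected, and fix $\xi \in X\quot G$. Each fiber $q^{-1}(\xi)$ is the union of the $G$-orbits whose closure contains the unique closed orbit $O_\xi$ lying in that fiber; each such $O' \cup O_\xi$ is connected (as $G$, hence $O'$, is connected and $O_\xi \subseteq \overline{O'}$) and all of them contain $O_\xi$, so $q^{-1}(\xi)$ is connected, hence path-connected since it is semialgebraic. Now fix $x_0 \in q^{-1}(\xi)$. For any loop $\gamma$ based at $\xi$, weak path lifting gives a path $\widetilde\gamma$ in $X$ with $q\circ\widetilde\gamma = \gamma$ and both endpoints in $q^{-1}(\xi)$; prepending and appending paths in $q^{-1}(\xi)$ from $x_0$ to those endpoints produces a loop at $x_0$ whose image under $q$ is homotopic to $\gamma$ (the new pieces map to constant loops). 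Hence $q_*\co \pi_1(X, x_0) \to \pi_1(X\quot G, \xi)$ is surjective, which is precisely the asserted $\pi_1$-surjectivity.
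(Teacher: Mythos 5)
Your overall architecture matches ours: use Richardson--Slodowy's real Kempf--Ness theory \cite{RiSl} to identify $X\quot G$ with $\mathcal{M}/K$ for the minimal-vector set $\mathcal{M}$ and a maximal compact $K\le G$, lift paths through the compact quotient $\mathcal{M}\to\mathcal{M}/K$, and then get $\pi_1$-surjectivity by closing up a lifted path inside a path-connected fiber. (For the last step we only need the closed orbit $(qi)^{-1}(p)\cong G/\mathrm{Stab}_G(x)$ inside $X^*$, which is where the endpoints of the lift actually land since the lift stays in $\mathcal{M}\subset X^*$; your connectivity argument for the full fiber in $X$ via orbit closures is more than is required, and it quietly relies on the Richardson--Slodowy fact that each fiber of $q$ is the union of orbits whose closures contain a unique common closed orbit, but it is not wrong.)

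The one step that would fail as written is the one you flag as the main technical point: path lifting for the orbit map of a compact group. Your induction on $\dim K$ assumes the slice theorem produces stabilizers $K_{m_i}$ that are \emph{proper} in $K$, but a stabilizer can equal all of $K$ (at a $K$-fixed point of $\mathcal{M}$), in which case the slice neighborhood is again a full $K$-quotient and the induction does not reduce. The finite-group base case also needs more care: ``extend continuously across the non-free locus'' is not automatic, and the free locus need not even be dense (e.g.\ when the action has a kernel). This is a genuinely classical theorem and the efficient route is to cite it: since $\mathcal{M}$ is a real algebraic set, it satisfies the Montgomery--Yang conditions for the existence of slices \cite{MY}, and Bredon \cite[p.~91]{Bredon-transf} shows that a compact Lie group action admitting slices has the (strong) path-lifting property for its orbit map. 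That is exactly what our proof does; if you prefer a self-contained argument for the compact step, you would need to repair the induction, say by inducting on orbit types or by an open-closed argument on the set of times up to which a lift exists, rather than on $\dim K$.
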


\begin{proof}
By definition, $X\quot G$ is the quotient space $X^*/G$ where $X^*$ is the subspace of points in $X$ with closed $G$-orbits.  By the generalization of Kempf-Ness Theory (see
\cite{KN}) developed in \cite{RiSl}, there exists a real algebraic subset 
$\mathcal{KN}\,\subset\, X$ 
such that $X\quot G$ is homeomorphic to $\mathcal{KN}/K$, where $K$ is a maximal
compact subgroup of $G$. Moreover, the natural diagram
$$
\xymatrix{\mathcal{KN} 
\ar@{->>}[rr] \ar@{^{(}->}[d] & & \mathcal{KN}/K\ar[d]^{\cong} \\ X^* \ar@{^{(}->}[r]^i & X \ar[r]^-q & X\quot G}
$$
commutes. The weak path-lifting property for $q$ now follows from the (weak) path-lifting property for $\mathcal{KN}\to \mathcal{KN}/K$.  Now, since $\mathcal{KN}$ is algebraic, it satisfies the conditions in 
\cite{MY} for there to be a slice at each point. As shown in \cite[page 
91]{Bredon-transf}, this implies that $\mathcal{KN} \, \to \, \mathcal{KN}/K$ has the (strong) path-lifting property.

Now assume $G$ is connected. Given $p\in X\quot G$, choose $x\in X^*$ such that $q(x) = p$.
By the previous paragraph, each loop $\gamma$ based at  $p$  lifts to a path  
$\widetilde{\gamma}$ in $X^*\,\subset\, X$.  However, this path in $X^*$ might not be a loop.  
The ends of the lifted path $\widetilde{\gamma}$ are in the fiber $(qi)^{-1}(p)$, which is 
homeomorphic to $G/\mathrm{Stab}_G(x)$.  Hence the fiber is path-connected
since $G$ is connected. Therefore, there exist paths in $(qi)^{-1}(p)$
that connects the ends of $\widetilde{\gamma}$ to $x$, resulting in a loop in $X^*\,\subset\, X$, based at $x$.  
This loop projects, under $q$, to $\gamma$ (up to reparametrization). Thus $q$ is $\pi_1$--surjective when $G$ is
connected.
\end{proof}

We note that Lemma \ref{pathlifting} includes the cases when $G$ is a reductive 
$\C$-group (so $X\quot G$ is the GIT quotient), and the case when $G$ is 
compact (so $X\quot G$ is the usual orbit space $X/G$).

\begin{corollary}\label{pi1onto}
Let $G$ be a real reductive Lie group that is both connected and algebraic, and let $\Gamma$ be a
finitely presented discrete group.  The natural projection $\hm(\Gamma, G)\,\to\, 
\fX_\Gamma(G)$ is $\pi_1$--surjective, as is the map $\hm^0(\Gamma, G)\,\to\, 
\fX^0_\Gamma(G)$ between the identity components of these spaces.
\end{corollary}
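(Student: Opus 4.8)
The plan is to deduce the first assertion directly from Lemma~\ref{pathlifting} once $\hm(\Gamma,G)$ is displayed as a linear $G$-variety, and to obtain the identity-component statement by running the Kempf--Ness argument in that lemma's proof through the component of the trivial representation $\rho_0$. To set up the first assertion, I would realize $\hm(\Gamma,G)$ as a $G$-invariant real algebraic subset of an affine space on which $G$ acts linearly. Write $\Gamma=\langle\gamma_1,\ldots,\gamma_s\mid r_1,\ldots,r_t\rangle$ and fix a faithful algebraic representation $G\hookrightarrow\GL_N(\R)$; composing with $g\mapsto(g,\det(g)^{-1})$ realizes $G$ as a closed real algebraic subset of $W:=\mathrm{Mat}_N(\R)\oplus\R$, and the conjugation action $h\cdot(A,u)=(hAh^{-1},u)$ is the restriction of a linear $G$-action on $W$. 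Since $g^{-1}$ is polynomial in the coordinates of $W$ (Cramer's rule), each relation $r_j$ imposes a polynomial condition, so $\hm(\Gamma,G)$ sits in $W^s$ as a real algebraic subset $X$ invariant under the linear simultaneous conjugation action of $G$, and $X\quot G=X^*/G=\fX_\Gamma(G)$. Lemma~\ref{pathlifting} then applies: since $G$ is connected, the projection $\hm(\Gamma,G)\to\fX_\Gamma(G)$ is $\pi_1$-surjective.

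For the identity components I keep the notation of the proof of Lemma~\ref{pathlifting}: let $\mathcal{KN}\subset X=\hm(\Gamma,G)$ be the Kempf--Ness set, so that $q$ restricts to a map $\mathcal{KN}\to\mathcal{KN}/K\cong\fX_\Gamma(G)$ with the strong path-lifting property. By the theory of \cite{RiSl}, $\mathcal{KN}$ meets every closed $G$-orbit in a single $K$-orbit, each orbit closure contains a unique closed orbit, and distinct closed orbits have distinct images under $q$; consequently $q$ is constant on each orbit closure, so $\overline{G\cdot y}\subset q^{-1}(q(y))$ for all $y$, and every fiber $q^{-1}(p)$ is the union of the sets $\overline{G\cdot y}$ with $y\in q^{-1}(p)$, each connected (as $G$ is) and all containing the unique closed orbit lying over $p$; hence $q^{-1}(p)$ is connected, and being semialgebraic it is path-connected. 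I also record that $\rho_0\in\mathcal{KN}$: the $G$-orbit of $\rho_0$ is the single closed point $\{\rho_0\}$, and $\mathcal{KN}\cap\{\rho_0\}$, being a nonempty $K$-orbit, is all of $\{\rho_0\}$. Finally, $\hm(\Gamma,G)$ and $\fX_\Gamma(G)$ are semialgebraic, so their connected components are open and path-connected; in particular $\hm^0(\Gamma,G)$ is the path-component of $\rho_0$.

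Now fix $p\in\fX^0_\Gamma(G)$. Using path-connectedness of $\fX^0_\Gamma(G)$, choose a path in it from $q(\rho_0)$ to $p$ and lift it, via strong path-lifting for $\mathcal{KN}\to\mathcal{KN}/K$, to a path in $\mathcal{KN}$ starting at $\rho_0$; this lift stays in $\hm^0(\Gamma,G)$, so its endpoint $x$ lies in $\mathcal{KN}\cap\hm^0(\Gamma,G)\cap q^{-1}(p)$. Given a loop $\gamma$ based at $p$ in $\fX^0_\Gamma(G)$, lift it the same way to a path $\widetilde\gamma$ in $\mathcal{KN}$ starting at $x$ with $q\circ\widetilde\gamma=\gamma$; it remains in $\hm^0(\Gamma,G)$, and its endpoint $\widetilde\gamma(1)$ lies in $q^{-1}(p)\cap\hm^0(\Gamma,G)$. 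By path-connectedness of $q^{-1}(p)$ there is a path from $\widetilde\gamma(1)$ back to $x$ inside $q^{-1}(p)$, hence inside $\hm^0(\Gamma,G)$, and $q$ sends it to the constant path at $p$. Concatenating with $\widetilde\gamma$ produces a loop $\eta$ in $\hm^0(\Gamma,G)$ based at $x$ with $[q\circ\eta]=[\gamma]$ in $\pi_1(\fX^0_\Gamma(G),p)$. Hence $(q|_{\hm^0})_*\co\pi_1(\hm^0(\Gamma,G),x)\to\pi_1(\fX^0_\Gamma(G),p)$ is surjective, and since $p$ was arbitrary the map $\hm^0(\Gamma,G)\to\fX^0_\Gamma(G)$ is $\pi_1$-surjective.

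I expect the algebraic embedding to be routine; the real obstacle is the identity-component statement, where the point is to be confident that the path-lifting and fiber-connectedness supplied by the Kempf--Ness machinery of \cite{RiSl} are strong enough both to trap lifts of paths based at $\rho_0$ inside the identity component and to close up a lifted loop within a single fiber of $q$ without leaving it. Verifying that $\rho_0\in\mathcal{KN}$ and that whole fibers of $q$---not merely fibers inside $\hm(\Gamma,G)^*$---are connected is where I anticipate needing the most care.
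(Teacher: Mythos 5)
Your proposal is correct and follows essentially the same route as the paper: realize $\hm(\Gamma,G)$ as a $G$-invariant real algebraic subset of a linear $G$-representation and invoke Lemma~\ref{pathlifting}, then handle the identity components by basing the Kempf--Ness path-lifting argument at the trivial representation (which has closed orbit). The paper compresses the identity-component step into a single sentence; your expanded version supplies details the paper leaves implicit, and note that since lifted paths land in $\mathcal{KN}\subset \hm(\Gamma,G)^*$, you only need path-connectedness of the single closed orbit over $p$, not of the entire fiber $q^{-1}(p)$.
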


\begin{proof}
Since every reductive $\C$-group has a faithful linear representation, we 
conclude that every real reductive Lie group does as well.  Thus, there is some 
$\mathfrak{gl}(n,\R)$ that contains $G$.  Let $\Gamma$ be generated by $r$ elements.  Then 
the conjugation action of $G$ is linear on the affine space 
$\mathfrak{gl}(n,\R)^{ r}$ which naturally contains $\hm(\Gamma, G)$ via the embedding $\hm(\Gamma, G)\,\subset\, G^{
r}\,\subset\, \mathfrak{gl}(n,\R)^{r}$.  Since $G$ is algebraic and $\Gamma$ is finitely presented, $\hm(\Gamma, G)$ is a real algebraic subset; the result follows by the previous lemma since $G$ is connected. For the statement regarding trivial components, note that the trivial representation has a closed orbit, so we may choose it as our base-point when running the argument in the previous lemma.
\end{proof}

Let $DG\,=\,[G,G]$ be the derived subgroup of $G$.  It is semisimple, normal, 
and closed.  Consequently, $G/DG$ is an Abelian Lie group. Since $G$ is 
connected, so is $G/DG$, and since $G$ is reductive $G/DG$ does not contain an 
affine factor. Therefore, $G/DG$ is a finite product of geometric and algebraic 
tori; that is, a product of copies of $S^1$ and $\C^*$.

Now assume that $G$ is either a reductive $\C$-group or compact (and still
connected). Under this assumption more can be said.  In particular, $G\cong 
(T\times_F DG)$, where $T\,=\, Z_0(G)\, \subset\,Z(G)$ is the connected 
component of the center $Z(G)$ containing the identity element, and $F\,=\,T\cap DG$ is
a central finite subgroup acting freely via $(t,h)\cdot f = (tf, f^{-1} h)$.  Consequently, we have $T/F\,\cong\, 
G/DG$. 

A finitely presented discrete group $\Gamma$ will be called {\it exponent 
canceling} if it admits a finite presentation in which the Abelianization of each relation is trivial (see 
\cite{Lawton-Ramras}).  Consequently, $\Gamma/[\Gamma,\Gamma]$ is free Abelian.  
We will define the rank of $\Gamma$ to be the rank of $\Gamma/[\Gamma,\Gamma]$.
The rank of $\Gamma$ will be denoted by $r$.

If $G$ is Abelian, $\pi_1(\fX_\Gamma(G))=\pi_1(\hm(\Gamma,G))\cong \pi_1(G)^r=\pi_1(G/DG)^r,$ so we assume that $G$ is not Abelian.  

\begin{lemma}\label{fib1}
Let $\Gamma$ be exponent-canceling and $G$ a connected reductive $\C$-group, or a connected compact Lie group.  Then there is a Serre fibration $$\xymatrix{\fX_\Gamma(DG) \ar@{^{(}->}[r] & \fX_\Gamma(G)
\ar@{->>}[r]&\fX_\Gamma(G/DG)}.$$
\end{lemma}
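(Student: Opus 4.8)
The plan is to realize the projection $\fX_\Gamma(G)\to\fX_\Gamma(G/DG)$ as the fibre bundle associated to a finite covering of tori, using the central torus $T$ to produce the structure group. Recall $G\cong T\times_F DG$ with $T=Z_0(G)$ and $F=T\cap DG$ finite and central, so $G/DG\cong T/F$. Since $\Gamma$ is exponent-canceling, $\Gamma/[\Gamma,\Gamma]\cong\Z^r$, hence $\hm(\Gamma,A)\cong A^r$ for every abelian Lie group $A$; in particular $\fX_\Gamma(G/DG)=\hm(\Gamma,G/DG)\cong(T/F)^r$, a geometric or algebraic torus, and the projection $T\to T/F$ induces a surjection $T^r\to(T/F)^r$ with finite kernel $F^r$. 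Thus $T^r\to(T/F)^r$ is a finite covering, and in particular admits local sections.

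Next I would set up the torus action. Because $T$ is central in $G$, the rule $(z\cdot\rho)(\gamma)=z(\gamma)\rho(\gamma)$ defines a continuous action of $\hm(\Gamma,T)\cong T^r$ on $\hm(\Gamma,G)$; it commutes with $G$-conjugation and, being translation by a central element, carries closed orbits to closed orbits, so it descends to a continuous $T^r$-action on $\fX_\Gamma(G)$. Under $\fX_\Gamma(G)\to\fX_\Gamma(G/DG)\cong(T/F)^r$ this action covers the translation action of $T^r$ through $T^r\to(T/F)^r$, which is transitive; since the trivial representation maps to the identity coset, the projection is surjective and identifies $\fX_\Gamma(G/DG)$ with the homogeneous space $T^r/F^r$. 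I would then identify the fibre over the basepoint: $\rho\co\Gamma\to G$ lies over the identity coset exactly when $\rho(\Gamma)\subset DG$, so the fibre is the image of $\hm(\Gamma,DG)$. Since $G=T\cdot DG$ with $T$ central, $G$-conjugation on $\hm(\Gamma,DG)$ coincides with $DG$-conjugation, and (as $DG$ is closed in $G$) a $G$-orbit in $\hm(\Gamma,DG)$ is closed in $\hm(\Gamma,G)$ iff it is closed in $\hm(\Gamma,DG)$; hence the natural map $\fX_\Gamma(DG)\to\fX_\Gamma(G)$ is injective with image exactly this fibre.

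To conclude, observe that $F\subset DG$ is central, so $F^r=\hm(\Gamma,F)$ acts on $\fX_\Gamma(DG)$ by the same pointwise-multiplication recipe. The $T^r$-equivariant surjection $\fX_\Gamma(G)\to T^r/F^r$ together with the map $T^r\times_{F^r}\fX_\Gamma(DG)\to\fX_\Gamma(G)$, $[z,[\sigma]]\mapsto[z\cdot\sigma]$, exhibits $\fX_\Gamma(G)$ as the bundle with fibre $\fX_\Gamma(DG)$ associated to the principal $F^r$-bundle $T^r\to T^r/F^r$. Local triviality comes directly from local sections of the covering $T^r\to(T/F)^r$: over a trivializing open $U$ with section $s\co U\to T^r$, the map $U\times\fX_\Gamma(DG)\to\fX_\Gamma(G)$, $(u,[\sigma])\mapsto[s(u)\cdot\sigma]$, is a homeomorphism onto the preimage of $U$. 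Thus $\fX_\Gamma(G)\to\fX_\Gamma(G/DG)$ is a fibre bundle over the torus $(T/F)^r$, hence a Serre fibration, and the fibre inclusion is the map $\fX_\Gamma(DG)\hookrightarrow\fX_\Gamma(G)$ found above.

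The main obstacle I anticipate is the bookkeeping in the last two steps: checking that the closed-orbit locus $\hm(\Gamma,G)^*$ restricts on each fibre to $\hm(\Gamma,DG)^*$ (so the fibrewise quotients are genuinely copies of $\fX_\Gamma(DG)$), and verifying that $[z,[\sigma]]\mapsto[z\cdot\sigma]$ is a homeomorphism rather than merely a continuous bijection — this is precisely where the local sections of $T^r\to(T/F)^r$ are needed, reducing the homeomorphism claim to a tautology over each trivializing neighborhood. Everything else — the action, its equivariance, and surjectivity of the projection — is formal once exponent-cancellation is used to pin down $\fX_\Gamma(G/DG)\cong(T/F)^r$.
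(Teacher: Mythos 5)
Your proof is correct, and it arrives at the same structural picture as the paper: $\fX_\Gamma(G)$ is identified with $[T^r\times \fX_\Gamma(DG)]/F^r$ sitting over $(T/F)^r\cong\fX_\Gamma(G/DG)$, with the covering $T^r\to (T/F)^r$ doing the real work. The difference is in how the two key steps are discharged. The paper obtains the homeomorphism $\fX_\Gamma(G)\cong[\fX_\Gamma(DG)\times\fX_\Gamma(T)]/F^r$ by citing the covering-space machinery of Lawton--Ramras applied to the covering sequence $F\to DG\times T\to G$ (which, behind the scenes, rests on the vanishing of the lifting obstruction and on a comparison of closed-orbit loci), and then deduces the Serre fibration property from its appendix result that if $f$ and $gf$ are Serre fibrations with $f$ surjective then so is $g$, applied to the $F^r$-quotient of the trivial fibration $\fX_\Gamma(DG)\times T^r\to T^r$. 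You instead construct the homeomorphism by hand as an associated bundle $T^r\times_{F^r}\fX_\Gamma(DG)\to\fX_\Gamma(G)$, using the central translation action of $\hm(\Gamma,T)\cong T^r$ and local sections of $T^r\to(T/F)^r$; in effect you replace the obstruction-theoretic lifting statement by the elementary observation that, because $\Gamma^{\mathrm{ab}}$ is free abelian, the abelianized representation $\Gamma\to T/F$ always lifts to $T$, and you replace the abstract Serre-fibration lemma by local triviality. What your route buys is self-containedness and transparency (no external citation, explicit trivializations); what the paper's route buys is brevity and a quotient lemma that is reused elsewhere. The two points you flag as remaining bookkeeping are genuine but routine: $\hm(\Gamma,G)^*$ does restrict on each fibre to a translate of $\hm(\Gamma,DG)^*$ because translation by a central $z$ and conjugation commute and $G$-conjugation on $\hm(\Gamma,DG)$ agrees with $DG$-conjugation, and the local-section formula $[\rho]\mapsto\bigl(q[\rho],\,[s(q[\rho])^{-1}\rho]\bigr)$ gives the continuous inverse since the preimage of a trivializing open set is saturated and open, so the quotient topology restricts correctly.
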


\begin{proof}
Recall that $G\,\cong\, (DG\times T)/F$, where $T$ is a central torus and $F$ 
is a finite central subgroup. 
The multiplication actions of $F$ on $DG$ 
and $T$ commute with the adjoint actions (since $T$ is central) inducing an action of $F^r$ on $\fX_\Gamma (DG) \cross \fX_\Gamma (T)$, and in fact this action is free since it is free on $T^r = \fX_\Gamma (T)$ (this equality uses the fact that $\Gamma/[\Gamma, \Gamma]$ is free Abelian of rank $r$).  Since 
$$F\maps DG\cross T \maps G$$
is a covering sequence of Lie groups, the proof of the main result in 
\cite{Lawton-Ramras}, along with \cite[Proposition 4.2]{Lawton-Ramras}, shows that there is a homeomorphism
$$\fX_\Gamma (G)\,\homeo\, [\fX_\Gamma (DG)\cross  \fX_\Gamma (T)]/F^r\, ,$$
where $F^r \,=\, \hm(\Gamma, F)$ acts diagonally.  Since $F^r$ acts freely on 
$\fX_\Gamma (T)\,=\, T^r$, this action is in fact a covering space action.

Now consider the trivial fibration (product projection)
$$\xymatrix{\fX_\Gamma (DG) \ar@{^{(}->}[r] & \fX_\Gamma (DG)\times \fX_\Gamma (T)
\ar@{->>}[r]&\fX_\Gamma (T)}.$$  
The projection on the right is equivariant with respect to the free actions of $F^r$, so we may apply Corollary \ref{fibration}
to conclude that
 $$\xymatrix{\fX_\Gamma(DG) \ar@{^{(}->}[r] & [\fX_\Gamma (DG)\times \fX_\Gamma (T)]/F^r \homeo   \fX_\Gamma(G) 
\ar@{->>}[r]&\fX_\Gamma(T)/F^r}$$ 
is a Serre fibration.  Since $\fX_\Gamma(T)/F^r\,=\, T^r/F^r\,=\, (T/F)^r\,=\, 
(G/DG)^r\,=\, \fX_\Gamma (G/DG)$, the proof is complete.
\end{proof}

\begin{lemma} \label{fib2} 
If $\Gamma$ is free Abelian, or the fundamental group of an orientable Riemann surface $($closed or open$)$, then the restriction of the fibration from Lemma~\ref{fib1} to the connected component of the trivial representation gives a Serre fibration
 $$\xymatrix{\fX^0_\Gamma(DG) \ar@{^{(}->}[r] &    \fX^0_\Gamma(G) 
\ar@{->>}[r]&\fX_\Gamma(G/DG)} .$$ 
\end{lemma}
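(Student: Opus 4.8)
The plan is to obtain this by restricting the Serre fibration of Lemma~\ref{fib1} to the path-component of the trivial representation (these spaces are semialgebraic, hence locally path-connected, so connected components and path-components agree), and then to identify the resulting fiber.

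First I would observe that the base $\fX_\Gamma(G/DG)$ is path-connected: $G/DG$ is abelian, so $\fX_\Gamma(G/DG)=\hm(\Gamma,G/DG)$, and since $\Gamma$ is exponent canceling with $\Gamma/[\Gamma,\Gamma]$ free abelian of rank $r$, this is the torus $(G/DG)^r$. Next I would record the soft fact that \emph{if $q\co E\to B$ is a Serre fibration with $B$ path-connected and $E_0\subseteq E$ is a path-component, then $q|_{E_0}\co E_0\to B$ is a surjective Serre fibration}: lifting a homotopy $D^n\times[0,1]\to B$ along $q$ starting from a map $D^n\to E_0$ produces a lift whose image, being path-connected and meeting $E_0$, lies in $E_0$; and surjectivity follows from path-lifting for $q$ because $B$ is path-connected. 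Applying this with $q$ the fibration of Lemma~\ref{fib1} and $E_0=\fX^0_\Gamma(G)$ yields a surjective Serre fibration $\fX^0_\Gamma(G)\twoheadrightarrow\fX_\Gamma(G/DG)$ whose fiber over the image of the trivial representation is $\fX^0_\Gamma(G)\cap\fX_\Gamma(DG)$, where $\fX_\Gamma(DG)\injects\fX_\Gamma(G)$ is the fiber inclusion of Lemma~\ref{fib1}.

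It then remains to prove $\fX^0_\Gamma(G)\cap\fX_\Gamma(DG)=\fX^0_\Gamma(DG)$. The inclusion $\supseteq$ is immediate, as $\fX^0_\Gamma(DG)$ is a path-connected subset of $\fX_\Gamma(DG)$ containing the trivial representation. For $\subseteq$ I would use the covering map $\fX_\Gamma(DG)\cross T^r\to\fX_\Gamma(G)$ from the proof of Lemma~\ref{fib1} (with $T^r=\fX_\Gamma(T)$ and $F^r=\hm(\Gamma,F)$ acting freely): its restriction to $\fX_\Gamma(DG)\cross\{e\}$ is injective and recovers the fiber inclusion, while its restriction to the component $\fX^0_\Gamma(DG)\cross T^r$ of the trivial representation surjects onto $\fX^0_\Gamma(G)$. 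Recalling that $F$ acts by central multiplication, so that $g\in\hm(\Gamma,F)$ carries the class of the trivial representation to the class $[g]$ of $g$ regarded as a representation into $DG$, a short orbit computation gives $\fX^0_\Gamma(G)\cap\fX_\Gamma(DG)=\bigcup_{g\in\hm(\Gamma,F)}g\cdot\fX^0_\Gamma(DG)$. Hence everything reduces to showing that $\hm(\Gamma,F)$ stabilizes the component $\fX^0_\Gamma(DG)$, equivalently that every homomorphism $g\co\Gamma\to F$, viewed as a representation into $DG$, lies in $\fX^0_\Gamma(DG)$.

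This last point is the real content of the lemma and the only step using that $\Gamma$ is free abelian or a surface group rather than merely exponent canceling. I would argue it as follows: since $F\subseteq Z(DG)$ and $Z(DG)$ is contained in some maximal torus $\mathcal T$ of $DG$ (valid for $DG$ connected reductive or connected compact), $g$ factors through $\mathcal T$; and $\hm(\Gamma,\mathcal T)$ is path-connected, being $\mathcal T^r$ when $\Gamma$ is free abelian and $\mathcal T^{2g}$ when $\Gamma$ is a genus-$g$ surface group (representations into the abelian group $\mathcal T$ factoring through $H_1$). So $g$ is joined to the trivial representation by a path in $\hm(\Gamma,\mathcal T)\subseteq\hm(\Gamma,DG)$; projecting this path to $\fX_\Gamma(DG)$ gives $[g]\in\fX^0_\Gamma(DG)$. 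I expect the main obstacle to lie not in this torus argument but in the bookkeeping preceding it -- making the covering-space model of $\fX_\Gamma(G)$ from Lemma~\ref{fib1} precise enough to see that the fiber over the base point is exactly the $\hm(\Gamma,F)$-orbit of $\fX^0_\Gamma(DG)$.
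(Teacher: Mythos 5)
Your argument is correct, but the way you establish the key identity $\fX^0_\Gamma(G)\cap\fX_\Gamma(DG)=\fX^0_\Gamma(DG)$ is genuinely different from the paper's. The paper reduces to the same identity and then treats the cases separately: for open surfaces both spaces are already connected; for closed surfaces it invokes Ho--Liu's isomorphism $\pi_0(\hm(\Gamma,H))\cong\pi_1(DH)$ to see that $\pi_0(\fX_\Gamma(DG))\to\pi_0(\fX_\Gamma(G))$ is injective; and for free Abelian $\Gamma$ it uses the model $\fX^0_\Gamma(G)\cong\widetilde{T}^r/W$ in the compact case and transfers to the complex reductive case via the Florentino--Lawton homotopy equivalences. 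You instead give a single uniform argument: from the free $F^r$-action on $\fX_\Gamma(DG)\times T^r$ in the proof of Lemma~\ref{fib1} you compute that the intersection is $\bigcup_{g\in\hm(\Gamma,F)}g\cdot\fX^0_\Gamma(DG)$, and each translate coincides with $\fX^0_\Gamma(DG)$ because a representation $\Gamma\to F\subseteq Z(DG)$ factors through a maximal torus $\mathcal{T}$ of $DG$ and $\hm(\Gamma,\mathcal{T})\cong\mathcal{T}^r$ is connected. The ``bookkeeping'' you flag does check out: the quotient map is open and closed, so it carries the component $\fX^0_\Gamma(DG)\times T^r$ onto $\fX^0_\Gamma(G)$, and its restriction to $\fX_\Gamma(DG)\times\{e\}$ is injective and realizes the fiber inclusion, which gives exactly your orbit description. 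What your route buys is uniformity and extra generality: every step, including the final torus step, uses only that $\Gamma$ is exponent canceling, so you actually prove the lemma for all such $\Gamma$, not just the free Abelian and surface cases. What the paper's closed-surface argument buys in exchange is the stronger conclusion that $\pi_0(\fX_\Gamma(DG))\to\pi_0(\fX_\Gamma(G))$ is injective on all components, not merely that the preimage of the identity component is the identity component.
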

\begin{proof}
Since $\fX_\Gamma(G/DG)$ is connected, we just need to show that 
$\fX^0_\Gamma(G)  \cap  \fX_\Gamma(DG) \,= \, \fX^0_\Gamma(DG)$.  

This is obvious in the case of an open surface since in that case $\Gamma$ is a 
free non-Abelian group and so $\fX_\Gamma(G)$ and $\fX_\Gamma(DG)$ are 
connected.

Next, we consider the case in which $\Gamma$ is the fundamental group of a 
closed orientable Riemann surface.  Note that it will suffice to show that the 
natural map $\pi_0 (\fX_\Gamma (DG)) \maps \pi_0 (\fX_\Gamma (G))$ is 
injective.  In the Appendix to \cite{Lawton-Ramras}, Ho and Liu established a 
natural isomorphism $\pi_0 (\hm(\Gamma, H) )\,\homeo\, \pi_1 (DH)$ for 
connected, 
complex reductive groups $H$; they had earlier shown this to be true when $H$ is compact and connected (see \cite{Ho-Liu-ctd-comp-II}).  Note that since $H$ is connected we may replace 
$\hm(\Gamma, H)$ by $\fX_\Gamma (H)$ in this statement.  Since $DG$ is its own derived subgroup, 
the map $\pi_0 (\fX_\Gamma (DG)) \to \pi_0 (\fX_\Gamma (G))$ corresponds to the identity map on $\pi_1 (DG)$ under 
Ho and Liu's isomorphism.

In the free Abelian case, and when $G$ is compact, this can be deduced from the 
natural homeomorphism between $\fX^0_\Gamma (G)$ and $\widetilde{T}^r/W$, where 
$\widetilde{T}\,\subset\,
G$ is a maximal torus and $W$ is its Weyl group.  Note that with respect to the 
decomposition $G\,=\,DG \times_F T$, we have $\widetilde{T}\,=\, T' \times_F T$, where $T'$ 
is a 
maximal torus in $DG$.  Then $\fX^0_\Gamma(DG)\,=\,(T')^r/W$. Clearly then, 
$\fX^0_\Gamma(G)\cap \fX_\Gamma(DG)\,=\, \fX^0_\Gamma(DG)$ given that a 
representation in $\fX^0_\Gamma(G)\cap \fX_\Gamma(DG)$ is in 
$\fX^0_\Gamma(G)\,\cong\,(T' \times_F T)^r/W$ and also has values in $DG$; 
meaning 
the $T$ part must lie in $F$, and so the representation is in 
$(T')^r/W\,\cong\, \fX^0_\Gamma (DG)$.

Now let $G$ be a reductive $\C$-group with maximal compact subgroup $K$, and 
again let $\Gamma$ be free Abelian.  Note that $DK$ is a maximal compact subgroup in 
$DG$ since $G/DG\,\cong\, (\C^*)^d$ and $K/DK\,\cong\,(S^1)^d$ are homotopy equivalent,
where $\dim Z(G)\,=\,d\,=\,\dim Z(K)$. Consider the commutative diagram:  
$$\xymatrix{\fX_\Gamma(DK) \ar@{^{(}->}[r] \ar@{^{(}->}[d]&    \fX_\Gamma(K)\ar@{^{(}->}[d]\\ 
\fX_\Gamma(DG)\ar@{^{(}->}[r]&\fX_\Gamma(G)}$$  The main result in 
\cite{FlLaAbelian} shows that the vertical maps in this diagram are homotopy equivalences, and in 
particular bijections on connected components.  These bijections on components 
clearly send the identity component to the identity component. The argument in 
the previous paragraph shows that the top map does not send any non-identity 
component to the identity component; it follows that the same is true for the 
bottom map.  Therefore, again, we have $\fX^0_\Gamma(G)\cap \fX_\Gamma(DG)\,=\,  
\fX^0_\Gamma(DG)$.
\end{proof}

Thus, we have a long exact sequence in homotopy: $$\cdots\to 
\pi_1(\fX_\Gamma^0(DG))\to \pi_1(\fX_\Gamma^0(G))\to \pi_1(\fX_\Gamma(G/DG))\to\cdots 
$$ However, $\pi_0 (\fX_\Gamma^0(DG))\,=\,0$ since this space is connected.  Also 
since $G/DG$ is Abelian and $\Gamma$ is exponent-canceling of rank $r$, we have
$$\fX_\Gamma(G/DG)\,\cong\, (G/DG)^r.$$ As above, $G/DG$ is a torus, so $\pi_2(\fX_\Gamma(G/DG))\,=\,0$ as 
well.  Hence we find that the above long exact sequence restricts to a short exact 
sequence on fundamental groups.

To complete 
the proof of Theorem \ref{thm:main}, it suffices to prove that $\fX_\Gamma^0(DG)$ is simply connected.
Since $\Gamma$ is exponent canceling of rank $r$, there exists a generating set $\{\gamma_1, \ldots, \gamma_r\}$ for $\Gamma$ in which all relations have trivial Abelianization; we fix one such choice for the remainder of the argument.  We have an associated embedding $\hm(\Gamma, G) \injects G^r$.
Consider the commutative diagram
 $$ \xymatrix{
DG\ar@{^{(}->}[r] \ar[d]& \hm^0(\Gamma,DG)\ar[d]^q \\
DG\quot DG \ar[r] &\fX_\Gamma^0(DG),  \\
} $$
where the top map is the $k$-th factor inclusion $g\,\mapsto\, 
(e,...e,g,e,...e),$ with $1\leq k\leq r$ (which maps into the identity component $\hm^0 (\Gamma, DG)$ of $\hm (\Gamma, DG)$ since $\Gamma$ is 
exponent canceling and $DG$ is connected) and the bottom map is well-defined 
since the $k$-th factor inclusion is $DG$-equivariant (with respect to 
conjugation). By \cite[Corollary 17]{Dal}, for any compact Lie group $H$ we have $\pi_1(H/H)\,=\,\pi_1(H/DH)$; we remind the reader that $H/DH$ is the quotient by left translation and $H/H$ is the quotient by conjugation.  
In particular, if $K\leq G$ is a maximal compact subgroup, then $DK$ is also compact and applying this result with $H= DK$, we find that the conjugation quotient
$DK/DK$ is simply connected.  Now, the main result of \cite{FlLaFree} gives a homotopy equivalence between the conjugation quotients $DG\quot DG$ and $DK/DK$, so we conclude that $DG\quot DG$ is simply connected as well.  It follows that all elements in $\pi_1(DG)$ all map to $0$ in $\pi_1(\fX_\Gamma^0(DG))$ by commutativity.  However, we have shown $q$ is $\pi_1$--surjective (Lemma \ref{pi1onto}).  Together, these observations imply that $\fX_\Gamma^0(DG)$ is simply connected whenever the images of the $k$-th factor inclusions generate $\pi_1(\hm^0(\Gamma, DG))$.  We call this latter property {\it inclusion generating}.

When $\Gamma$ is free non-Abelian, $\hm^0(\Gamma,DG))\,\cong\, (DG)^r$ and thus 
it is obviously inclusion-generating; this gives a shorter proof of the main 
result in \cite{BiLa}, and completes the proof of Theorem \ref{thm:main} when 
$\Gamma$ is free non-Abelian.

In the case when $\Gamma$ is free Abelian, the main results in \cite{GPS} and 
\cite{PeSo} imply that the $k$-th factor inclusions induce an isomorphism
$$\pi_1(DG)^r \,\stackrel{\cong}{\longrightarrow}\, \pi_1(\hm^0(\Gamma,DG))\, .$$
In order to see that the map $\pi_1(DG)^r \to \pi_1(\hm^0(\Gamma,DG))$ is well-defined, 
one needs to know a priori that the images of the $k$-th factor inclusions commute 
with one another.  By the main results in \cite{GPS} and \cite{PeSo}, $\hm^0 (\bbZ^k, 
\wt{DG})$ is simply connected.  Note here that $\wt{DG}$ has the form $\bbF^k \cross 
H_1\cross \cdots \cross H_n$, where $\bbF$ is either $\bbR$ or $\bbC$ and the $H_i$ 
are simply connected, simple Lie groups (see \cite[Section 2]{Lawton-Ramras}, for 
instance).  Now the argument in Lemma 2.6 (below) shows that $ \pi_1(\hm^0(\Gamma,DG))$ is 
Abelian, and hence the $k$-th factor inclusions automatically commute with one 
another. Since $\pi_1(DG)^r \,\stackrel{\cong}{\longrightarrow}\, \pi_1(\hm^0(\Gamma,DG))$,
we again see that $\fX_\Gamma^0(DG))$ is inclusion-generating.
This completes the proof of Theorem \ref{thm:main} when $\Gamma$ is free 
Abelian.

It remains to show that $\hm^0(\Gamma, DG)$ is inclusion generating when $\Gamma$ is a closed hyperbolic surface group (note that the genus 0 case is trivial, and the  genus 1 case is included the free Abelian case above).  This will follow from the next two lemmas.

\begin{lemma}
The space $\hm(\Gamma, G)$ is simply connected when $\Gamma$ is a closed 
hyperbolic 
surface group and $G$ is semisimple, simply connected, and complex, or $G$ is
simply connected and compact.
\end{lemma}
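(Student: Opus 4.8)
The plan is to realize $\hm(\Gamma, G)$ as a homotopy quotient / space built from the surface, using the standard fact that for a genus $g\geq 2$ surface group $\Gamma = \langle a_1,b_1,\ldots,a_g,b_g \mid \prod [a_i,b_i]\rangle$, the representation space $\hm(\Gamma, G)$ is the fiber over the identity $e$ of the commutator-product map
$$
\mu\co G^{2g}\,\longrightarrow\, G,\qquad \mu(A_1,B_1,\ldots,A_g,B_g)\,=\,\prod_{i=1}^g [A_i,B_i].
$$
First I would recall (this is a theorem of Goldman, with the relevant smoothness/connectedness input from Rapinchuk–Benyash-Krivetz–Chernousov and Li, valid precisely because $G$ is semisimple and simply connected so that $\hm(\Gamma,G)$ is connected) that $\mu$ is a surjective map whose restriction to a suitable dense open set behaves like a fibration; more usefully, I would instead use the Ho–Liu / Atiyah–Bott style description. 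Since $G$ is simply connected (compact, or complex semisimple simply connected), the obstruction-theoretic picture says $\hm(\Gamma, G)$ is connected, and one expects $\pi_1$ to vanish because $\pi_1(G)=0$ and $\pi_2(G)=0$ (the latter holds for all Lie groups), so the "low-dimensional homotopy" of the mapping space of a surface into $G$ should be trivial.

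The cleanest route I would take: identify $\hm(\Gamma,G)$ up to homotopy with the space of based gauge-equivalence classes, or more directly, use the evaluation fibration
$$
\mathrm{Map}_*(\Sigma_g, BG)\,\longrightarrow\,\mathrm{Map}(\Sigma_g, BG)\,\longrightarrow\, BG,
$$
together with the identification (valid since $G$ is connected and $\pi_1(G)=0$, so $G$-bundles on $\Sigma_g$ are classified only up to the relevant component data which is trivial here) of $\hm(\Gamma,G)/G$-type data with components of $\mathrm{Map}(\Sigma_g, BG)$. Concretely, I would instead argue directly on $G^{2g}$: stratify $\mu^{-1}(e) = \hm(\Gamma,G)$ and compare with $G^{2g}$ via $\mu$, using that $\mu$ is split-surjective up to homotopy (the map $G\hookrightarrow G^{2g}$, $g\mapsto (g,g^{-1},e,\ldots,e)$-type section does NOT literally split $\mu$, so one must be more careful) — this is the step I expect to be the main obstacle. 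The honest tool is the result of Gómez–Pestun–Sharma / Pestun–Sharma type (cited below in the main argument as \cite{GPS}, \cite{PeSo}) on the topology of commutator maps, or Baird's computation of the rational cohomology of $\hm(\Gamma,G)$; but for $\pi_1$ specifically the right input is that $\mu\co G^{2g}\to G$ is, up to homotopy, a fibration with simply connected fiber $\hm(\Gamma,G)$ — and to establish THIS one shows $\mu_*\co \pi_1(G^{2g})\to \pi_1(G)$ and $\mu_*\co \pi_2(G^{2g})\to\pi_2(G)$ are both surjective (trivially, since $\pi_1(G)=\pi_2(G)=0$) and then reads off $\pi_1$ of the fiber from the long exact sequence once one knows $\mu$ is a quasifibration over a neighborhood of $e$.

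So the key steps, in order, are: (1) write $\hm(\Gamma,G)=\mu^{-1}(e)$ for the commutator-product map $\mu\co G^{2g}\to G$; (2) invoke connectedness of $\hm(\Gamma,G)$ from \cite{Ho-Liu-ctd-comp-II} and its complex analogue (Ho–Liu in the appendix to \cite{Lawton-Ramras}), using $\pi_1(G)=0$; (3) show $\mu$ is a quasifibration, or at least $\pi_1$- and $\pi_2$-controlled, near $e$ — here one uses that the commutator map $G\times G\to G$ and its iterates are "surjective on low homotopy" and that $G$ is $2$-connected; (4) apply the long exact sequence of the (quasi)fibration $\hm(\Gamma,G)\to G^{2g}\xrightarrow{\mu} G$, getting an exact sequence $\pi_2(G)\to \pi_1(\hm(\Gamma,G))\to \pi_1(G^{2g})\to\pi_1(G)$, i.e. $0\to \pi_1(\hm(\Gamma,G))\to 0$, hence $\pi_1(\hm(\Gamma,G))=0$. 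The main obstacle is step (3): justifying the (quasi)fibration property of $\mu$ near the identity, which is exactly where semisimplicity and simple-connectivity of $G$ enter essentially (for non-simply-connected $G$ the fiber $\mu^{-1}(e)$ is disconnected and this fails). I would expect the paper to handle step (3) either by citing \cite{GPS}/\cite{PeSo} directly for the simply connected, simple factors $H_i$ (and $\bbF^k$ factors, which are contractible), or by an explicit Morse-theoretic / slice argument on $G^{2g}$; given the structure of the surrounding argument, invoking \cite{GPS} and \cite{PeSo} for the simply connected simple case and then bootstrapping along the product decomposition $\wt{DG}=\bbF^k\times H_1\times\cdots\times H_n$ is the expected route.
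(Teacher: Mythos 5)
Your step (3) is a genuine gap, and it is essentially the entire content of the lemma. You want to treat the commutator--product map $\mu\co G^{2g}\to G$ as a quasifibration near $e$ and read off $\pi_1(\mu^{-1}(e))$ from the long exact sequence, but the identity is precisely the worst point for this: $\mu^{-1}(e)$ contains the reducible (indeed the trivial) representation, where $d\mu$ is far from surjective, so $\mu$ is not even a submersion along its fiber over $e$, and no quasifibration property near $e$ is established --- or available in the literature you point to. The references \cite{GPS} and \cite{PeSo} concern $\hm(\bbZ^k,G)$, i.e.\ commuting tuples, and say nothing about the surface-group relator map; they cannot be ``bootstrapped'' to supply step (3). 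Without that step your exact sequence $\pi_2(G)\to\pi_1(\hm(\Gamma,G))\to\pi_1(G^{2g})$ has no justification, even though the conclusion you want to extract from it is correct.

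The paper closes exactly this gap by abandoning the finite-dimensional model. For complex semisimple simply connected $G$ it simply quotes Li \cite{Li-surface-groups}. For compact simply connected $G$ it identifies $\hm(\Gamma,G)\cong\mathcal{A}_\flat/\mathcal{G}_0$ (flat connections modulo the based gauge group) and runs the fibration $\mathcal{G}_0\to\mathcal{A}_\flat\to\hm(\Gamma,G)$ the \emph{other} way: $\mathcal{A}_\flat$ is simply connected because, by R{\aa}de, it is a deformation retract of the semistable stratum $\mathcal{C}_{ss}$, whose complement in the affine space $\mathcal{C}$ has complex codimension $\geq 2$ by the Atiyah--Bott formula $\codim(C_\mu)=\sum_{\alpha(\mu)>0}(\alpha(\mu)+g-1)$ (this is where $g\geq 2$ enters); and $\mathcal{G}_0=\mathrm{Map}_*(M^g,G)\simeq(\prod_{2g}\Omega G)\times\Omega^2 G$ is connected because $\pi_1(G)=\pi_2(G)=0$. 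Your intuition that the result should follow from $\pi_1(G)=\pi_2(G)=0$ is the right heuristic --- it is exactly what makes $\mathcal{G}_0$ connected --- and your early mention of the Atiyah--Bott/mapping-space picture is the correct road; but you turned off it onto the commutator map, where the crucial local-triviality input is missing.
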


\begin{proof}
By \cite{Li-surface-groups}, we know that $\hm(\Gamma, G)$ is simply connected when $G$ is complex semisimple and simply connected.  We now show this result also holds when $G$ is compact and simply connected.

Let $\Gamma\,=\,\pi_1(M^g)$, where $M^g$ is a closed hyperbolic surface of 
genus $g$ (so $g\geq 2$), and let $G_\C$ be the complexification of $G$.  Let 
$P\, =\, M^g\times G$ be the trivial $G$-bundle over $M^g$ and let $P_\C\,=\,
M^g\times G_\C$ be 
the associated $G_\C$-bundle. The space of all $G$-connections on $P$, 
denoted by $\mathcal{A}$, is naturally identified with the space of all 
holomorphic structures on $P_\C$, denoted by $\mathcal{C}$. Both are infinite 
dimensional complex affine spaces.  Under this isomorphism, the Yang-Mills 
stratification of $\mathcal{A}$, determined by the Yang-Mills functional on $\mathcal{A}$, corresponds to the Harder-Narasimhan stratification of $\mathcal{C}$ (see \cite{A-B, Dask}).

Let $\mathcal{G}$ be the gauge group, which is the group of automorphisms of 
$P$. Let $\mathcal{G}_0\,\subset\, \mathcal{G}$ be the based gauge group,
consisting of all automorphisms that are identity on the fiber $P_{x_0}$ over
the base point $x_0$ of $M^g$. The space $\hm(\Gamma, G)$ is homeomorphic to 
the space of flat connections on 
the trivial $G$-bundle over $M^g$ modulo the action of 
$\mathcal{G}_0$.  In particular, letting $\mathcal{A}_\flat\,\subset\, \mathcal{A}$ 
be the subspace of flat $G$-connections, we have $\hm(\Gamma, G) \,\cong\, 
\mathcal{A}_\flat/\mathcal{G}_0$.

The space $\mathcal{A}_\flat$ is the (unique) minimum critical set of the 
Yang-Mills functional and the stable manifold of this critical set is the space of 
semi-stable holomorphic structures on $P_\C$, denoted by 
$\mathcal{C}_{ss}\,\subset\, \mathcal{C}$.  By the main result of \cite{Rade}, $\mathcal{C}_{ss}$ 
deformation retracts onto $\mathcal{A}_\flat$.

First we show that $\mathcal{A}_\flat$ is simply connected. In Section 10 of 
\cite{A-B}, Atiyah and Bott state a formula for the complex codimension of the 
(disjoint) stratification $\mathcal{C}\,=\,\cup\mathcal{C}_{\mu}$:  $$\codim 
(C_{\mu})\,=\, \sum_{\alpha(\mu) > 0} \left(\alpha (\mu) + g - 1\right)\, .$$  
Here $\mu$ records the Harder-Narasimhan type of a $G_\C$-bundle with 
connection (associated with an element in the positive Weyl chamber), and $C_{\mu}$ is the corresponding stratum.   The symbol $\alpha$ runs over the positive roots of $G$ which implies $\alpha(\mu)\geq 0$.  Since $g\geq 2$, every term in the formula contributes at least $2$ to the complex codimension, excepting only the unique open stratum corresponding to $\mathcal{C}_{ss}$, which we denote by $\mathcal{C}_{\mu_0}$.  Therefore, $\mathcal{C}_{ss}=\mathcal{C}-\cup_{\mu\not=\mu_0}\mathcal{C}_{\mu}$ is simply connected as it is the complement of a countable disjoint union, in a Sobolev space, of locally closed submanifolds of complex codimension at least 2 (see Corollary 4.8 in \cite{Ramras}).  Since $\mathcal{A}_\flat$ is homotopy equivalent to $\mathcal{C}_{ss}$ we conclude that it is simply connected too.

Next we show that $\mathcal{G}_0$ is path connected.  The based gauge group of the trivial $G$-bundle over $M^g$ is the based mapping space $\mathcal{G}_0=\textrm{Map}_* (M^g, G)$.  The homotopy equivalence $G\srm{\simeq} \Omega BG$ (where $\Omega X$ denotes the based loop space)  induces a homotopy equivalence
$$\textrm{Map}_* (M^g, G) \,\simeq \, \textrm{Map}_* (M^g, \Omega BG)$$ and by 
adjointness, $$\textrm{Map}_* (M^g, \Omega BG)\,\cong \,\textrm{Map}_* (\Sigma 
M^g, BG)\, ,$$
where $\Sigma M^g$ is the reduced suspension $S^1\wedge M^g$. Since the 
attaching map for the 2--cell in $M^g$ is a product of commutators, 
its suspension is trivial as an element of $\pi_2 (\Sigma (S^1)^{2g})$; here
we view $(S^1)^{2g}$ as the 1-skeleton of $M^g$.  Hence we have a homotopy 
equivalence $\Sigma M^g\,\simeq\, (\bigvee_{2g} S^2) \vee S^3$, and consequently 
$$\textrm{Map}_* (\Sigma M^g, BG) \,\simeq\, \left(\prod_{2g} \Omega^2 
BG\right) \times \Omega^3 BG \simeq  \left(\prod_{2g} \Omega G\right) \times 
\Omega^2 G\, .$$  Since $\pi_0 ( \Omega G)\,\cong\,\pi_1 (G)\,=\, \{1\}$ and 
$\pi_0(\Omega^2 G)\,\cong\,\pi_2 (G)\,=\,\{1\}$, we find that $\textrm{Map}_* 
(\Sigma M^g, BG)$ is path connected, as required.

Therefore, since we have shown $\mathcal{G}_0$ is connected and 
$\mathcal{A}_\flat$ is simply connected, we conclude that $\hm(\Gamma, G)$ is 
simply connected because the action of $\mathcal{G}_0$ on $\mathcal{A}_\flat$ 
defines a fibration sequence $$\mathcal{G}_0\,\to\, \mathcal{A}_\flat\,\to\, 
\mathcal{A}_\flat/\mathcal{G}_0\cong \hm(\Gamma, G)$$ (see 
\cite{Goldman-Millson}), which in turn gives the sequence 
$$0\,=\,\pi_1(\mathcal{A}_\flat)\,\to\, 
\pi_1(\mathcal{A}_\flat/\mathcal{G}_0)\,\cong\, 
\pi_1(\hm(\Gamma, G))\,\to\, \pi_0(\mathcal{G}_0)\,=\,0\,.$$
This completes the proof of the lemma.\end{proof}

Given that we have now shown that $\hm(\Gamma, G)$ is simply connected when $G$ is 
semisimple and simply connected (complex or compact\footnote{Note that all compact, simply connected Lie groups are semisimple.}), and $\Gamma$ is a hyperbolic 
surface group, the following observation now completes the proof of the closed 
surface group case, and consequently, the proof of the main theorem.  We note that a 
similar argument, in the free Abelian case, appears in \cite[Section 3]{GPS}.

\begin{lemma} Let $G$ be a Lie group with universal cover $\widetilde{G}$, and 
let $\Gamma$ be exponent-canceling.
If $\hm(\Gamma, \widetilde{G})$ is simply connected, then $\hm(\Gamma, G)$ is inclusion-generating.  In fact, the factor inclusions induce an isomorphism
$(\pi_1 (G))^r \,\to\, \pi_1 (\hm^0 (\Gamma, G))$.
\end{lemma}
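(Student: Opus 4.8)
The plan is to upgrade the covering $\widetilde{G}{}^{\,r}\to G^r$ (applied coordinatewise to the universal covering $\widetilde{G}\to G$) to a \emph{universal} covering of $\hm^0(\Gamma,G)$ whose deck group is $\pi_1(G)^r$, and then to check that the resulting identification $\pi_1(\hm^0(\Gamma,G))\cong\pi_1(G)^r$ is precisely the one induced by the factor inclusions. The exponent-canceling hypothesis enters in exactly one place, to guarantee that this coordinatewise covering carries $\hm(\Gamma,\widetilde{G})$ into $\hm(\Gamma,G)$, and the simple connectivity of $\hm(\Gamma,\widetilde{G})$ enters only at the very end.

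First I would fix an exponent-canceling presentation $\Gamma=\langle\gamma_1,\dots,\gamma_r\mid w_1,\dots,w_s\rangle$, so that $\hm(\Gamma,G)$ is the fiber over $(e,\dots,e)$ of the relation map $\mathcal{R}\co G^r\to G^s$, $(g_1,\dots,g_r)\mapsto\bigl(w_1(g_\bullet),\dots,w_s(g_\bullet)\bigr)$, and likewise $\hm(\Gamma,\widetilde{G})=\widetilde{\mathcal{R}}^{-1}(e,\dots,e)$. Write $\pi\co=\pi_1(G)=\ker(\widetilde{G}\to G)$, a central discrete subgroup of $\widetilde{G}$, and let $p\co\widetilde{G}{}^{\,r}\to G^r$ be the coordinatewise covering; it is regular with deck group $\pi^r$ acting by coordinatewise left translation. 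The one computation that makes everything run is: since each relator $w_j$ has trivial abelianization, for every $z=(z_1,\dots,z_r)\in\pi^r$ and every $\widetilde\rho\in\hm(\Gamma,\widetilde{G})$ one has $w_j\bigl(z_1\widetilde\rho(\gamma_1),\dots,z_r\widetilde\rho(\gamma_r)\bigr)=\bigl(\prod_i z_i^{\,e_{ij}}\bigr)\,w_j(\widetilde\rho(\gamma_\bullet))=w_j(\widetilde\rho(\gamma_\bullet))$, because the exponent sum $e_{ij}$ of $\gamma_i$ in $w_j$ vanishes. Hence the $\pi^r$-action on $\widetilde{G}{}^{\,r}$ restricts to a free, properly discontinuous action on $\hm(\Gamma,\widetilde{G})$.

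Next I would show that $p$ maps $\hm(\Gamma,\widetilde{G})$ \emph{onto} $\hm^0(\Gamma,G)$. The inclusion $p(\hm(\Gamma,\widetilde{G}))\subseteq\hm^0(\Gamma,G)$ is immediate: $p\circ\widetilde\rho$ is always a homomorphism $\Gamma\to G$, and $\hm(\Gamma,\widetilde{G})$ is connected (being simply connected) and contains the trivial representation. For surjectivity, given $\rho\in\hm^0(\Gamma,G)$ I would pick a path in $\hm^0(\Gamma,G)\subseteq G^r$ from the trivial representation to $\rho$, lift it through $p$ starting at the trivial representation of $\hm(\Gamma,\widetilde{G})$, and note that applying $\widetilde{\mathcal{R}}$ to this lifted path gives a path in $\widetilde{G}{}^{\,s}$ lying over the constant path $(e,\dots,e)$, hence a constant path in the discrete set $\pi^s$ starting at $(e,\dots,e)$; so the lifted endpoint lies in $\widetilde{\mathcal{R}}^{-1}(e,\dots,e)=\hm(\Gamma,\widetilde{G})$ and maps to $\rho$. (Equivalently one can observe that the displayed relator identity makes $\rho\mapsto\widetilde{\mathcal{R}}(\text{any }p\text{-lift of }\rho)$ a well-defined locally constant map $\hm(\Gamma,G)\to\pi^s$, necessarily trivial on the identity component.) Thus $p$ restricts to a regular covering $\hm(\Gamma,\widetilde{G})\to\hm^0(\Gamma,G)$ with deck group $\pi^r=\pi_1(G)^r$; as the total space is simply connected this is the universal cover, whence $\pi_1(\hm^0(\Gamma,G))\cong\pi_1(G)^r$ via the deck action.

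Finally I would match this abstract isomorphism with the factor inclusions. For each $k$ the lift $\widetilde\iota_k\co\widetilde{G}\to\hm(\Gamma,\widetilde{G})$, $\widetilde g\mapsto(e,\dots,\widetilde g,\dots,e)$, covers the $k$-th factor inclusion $\iota_k\co G\to\hm^0(\Gamma,G)$ composed with $\widetilde{G}\to G$; pushing a loop in $G$ representing $z\in\pi_1(G)$ through $\widetilde\iota_k$ then shows that $(\iota_k)_*(z)\in\pi_1(\hm^0(\Gamma,G))$ corresponds under the deck isomorphism to $(e,\dots,z,\dots,e)\in\pi^r$. Because this isomorphism is injective and $\pi^r$ is abelian, the images of the several $(\iota_k)_*$ commute, so they assemble to a homomorphism $\pi_1(G)^r\to\pi_1(\hm^0(\Gamma,G))$ which the isomorphism identifies with the identity of $\pi^r$; in particular it is an isomorphism and $\hm(\Gamma,G)$ is inclusion generating, as desired. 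I expect no deep obstacle here: the only genuine content is the relator identity together with the surjectivity of $p$ onto the \emph{identity} component, and the part most likely to need care is the covering-space bookkeeping — keeping track of which component one is in and verifying proper discontinuity of the $\pi^r$-action.
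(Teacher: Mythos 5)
Your proof is correct and follows essentially the same route as the paper: both arguments rest on the fact that $\hm(\Gamma,\widetilde{G})\to\hm^0(\Gamma,G)$ is a normal covering with deck group $\pi_1(G)^r$, so that simple connectivity upstairs identifies $\pi_1(\hm^0(\Gamma,G))$ with $\pi_1(G)^r$ compatibly with the factor inclusions. The only difference is presentational: you reprove the covering statement directly via the exponent-sum computation (this is Goldman's Lemma 2.2, which the paper cites) and read off the isomorphism from the deck action, whereas the paper extracts it from the long exact sequence of the covering compared against $\widetilde{G}^r\to G^r$.
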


\begin{proof} Our choice of generators $\gamma_1, \ldots, \gamma_r$ for $\Gamma$ induces a surjection $F_r \,\to\, \Gamma$.  Consider 
the resulting commutative diagram of representation spaces:
$$\xymatrix{ (\pi_1 (G))^r \ar[r]  \ar[d]^=  & \hm (\Gamma, \widetilde{G}) \ar[r] \ar[d]^j & \hm (\Gamma, G) \ar[d]^i\\
			(\pi_1 (G))^r \ar[r]  & \widetilde{G}^r \ar[r] & G^r.}
$$
The bottom row is the $r$--fold product of the universal covering map of $G$, 
hence a normal covering map, and the top row is a normal covering map by 
Goldman~\cite[Lemma 2.2]{Goldman-components} (note that $(\pi_1 (G))^r\,=\, \hm 
(\Gamma, \pi_1 (G))$ since $\Gamma$ is exponent canceling).  We obtain a resulting diagram of long exact sequences in homotopy, which reads in part:
$$\xymatrix{  
			\pi_1 (\hm^0 (\Gamma, \widetilde{G})) = 0 \ar[r]  \ar[d]^=& \pi_1 ( \hm^0 (\Gamma, G)) \ar[r]^-\delta \ar[d]^{i_*} 
				& (\pi_1 (G))^r \ar[r]\ar[d]^= & \pi_0 (\hm^0(\Gamma, \widetilde{G})) = 0 \ar[d]^= \\
				 \pi_1 (\widetilde{G}^r) = 0 \ar[r] & (\pi_1 (G))^r \ar[r]^-\delta  & 
					(\pi_1 (G))^r  \ar[r] & (\pi_0 (\widetilde{G}))^r = 0.}
$$
Note that the boundary maps $\delta$ are homomorphisms since these covering spaces are normal.
It  follows that   $i_*$ is an isomorphism, and the inverse of $i_*$, restricted to any factor of $(\pi_1 (G))^r$, is precisely the corresponding factor-inclusion map.
\end{proof}

We end with a conjecture that is motivated by the results, and the proofs, in this paper:

\begin{conjecture}
Let $G$ be a connected real reductive Lie group, and let $\Gamma$ be an exponent-canceling discrete group.  Then $\pi_1(\fX_{\Gamma}^0(G))\,=\,\pi_1(K/[K,K])^{r},$ where $r=\mathrm{Rank}\left(\Gamma/[\Gamma,\Gamma]\right)$ and $K$ is a maximal compact subgroup of $G$.
\end{conjecture}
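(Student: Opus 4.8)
The plan is to run the strategy of Theorem~\ref{thm:main} in two stages: first reduce a general connected real reductive $G$ to the semisimple case, and then treat semisimple $G$ by the covering-space/factor-inclusion argument that handles $DG$ in the proof of Theorem~\ref{thm:main}. Two sub-cases are already settled: for $\Gamma$ free or free Abelian the statement is the Corollary following Theorem~\ref{thm:main} (deduced from the compact case via the homotopy equivalences $\fX^0_\Gamma(G)\simeq\fX^0_\Gamma(K)$ of \cite{CFLO,FlLaAbelian,FlLaFree}), and for $G$ Abelian it is immediate from $\fX_\Gamma(G)=\hm(\Gamma,G)\cong G^r$. So assume $\Gamma$ is exponent-canceling and $G$ is non-Abelian. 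A connected real reductive Lie group in the sense used here is linear, hence still decomposes as $G\cong(DG\times Z_0(G))/F$ with $F=DG\cap Z_0(G)\subseteq Z(DG)$ finite; moreover $F^r=\hm(\Gamma,F)$ maps $\fX^0_\Gamma(DG)$ onto itself, since every homomorphism $\Gamma\to Z(DG)$ lifts to $\widetilde{DG}$ and so all central-valued representations lie in the identity component. With these two observations the proofs of Lemmas~\ref{fib1} and~\ref{fib2} and of the covering machinery they invoke go through verbatim for any exponent-canceling $\Gamma$, yielding the Serre fibration $\fX^0_\Gamma(DG)\hookrightarrow\fX^0_\Gamma(G)\twoheadrightarrow\fX_\Gamma(G/DG)\cong(G/DG)^r$ with base homotopy equivalent to a torus. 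Its long exact sequence collapses to $0\to\pi_1(\fX^0_\Gamma(DG))\to\pi_1(\fX^0_\Gamma(G))\to\pi_1(G/DG)^r\to 0$; granting the conjecture for the semisimple group $DG$, the outer terms are free Abelian, the sequence splits, and the bookkeeping identity $\dim Z_0(K)=\dim Z_0(K_{DG})+\mathrm{rank}\,\pi_1(G/DG)$ (where $K,K_{DG}$ are maximal compacts of $G$ and $DG$) identifies the middle group with $\pi_1(K/DK)^r$. It therefore suffices to prove the conjecture for semisimple $G$.

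For semisimple $G$ one has $DG=G$ and the fibration above is trivial; here the input is that $\hm^0(\Gamma,G)=\hm(\Gamma,\widetilde{G})/\pi_1(G)^r$ is a normal covering by \cite[Lemma 2.2]{Goldman-components}, so that (by the last lemma above) $\pi_1(\hm^0(\Gamma,G))\cong\pi_1(G)^r$ with the factor inclusions generating, \emph{provided} $\hm(\Gamma,\widetilde{G})$ is simply connected. Since $\widetilde{G}$ is a product of simply connected simple real or complex Lie groups, this is equivalent to: $\hm(\Gamma,H)$ is simply connected for every simply connected simple (real or complex) Lie group $H$. Granting that, the commutative square of factor inclusions $G\hookrightarrow\hm^0(\Gamma,G)$ and $G\to G\quot G$ (exactly as for $DG$ in Theorem~\ref{thm:main}), together with $\pi_1$-surjectivity of $\hm^0(\Gamma,G)\to\fX^0_\Gamma(G)$ (Corollary~\ref{pi1onto}) and the identification $\pi_1(G\quot G)\cong\pi_1(K/DK)$ from \cite{FlLaFree} and \cite{Dal}, produces a surjection $\pi_1(K/DK)^r\twoheadrightarrow\pi_1(\fX^0_\Gamma(G))$. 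When a maximal compact $K$ of $G$ is itself semisimple — in particular when $G$ is compact semisimple or complex semisimple — one has $K/DK=\{*\}$, so $G\quot G$ is simply connected and the target vanishes, recovering Theorem~\ref{thm:main} in those cases. In the genuinely new case, a real form whose maximal compact is not semisimple (for instance $\mathrm{SL}(2,\R)$, $\mathrm{Sp}(2n,\R)$, or $\mathrm{SU}(p,q)$), one must still show the surjection above is an isomorphism, and I would attempt this through a comparison with the compact group $K$: namely, by proving that $\fX^0_\Gamma(K)\hookrightarrow\fX^0_\Gamma(G)$ induces an isomorphism on $\pi_1$.

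I expect the simple-connectivity of $\hm(\Gamma,H)$, for $H$ simply connected simple and $\Gamma$ an arbitrary exponent-canceling group, to be the main obstacle: it is currently known only for $\Gamma$ free \cite{BiLa}, free Abelian \cite{GPS,PeSo}, or a closed surface group (shown above), and a general proof will likely require either a structure theory realizing every exponent-canceling group from free Abelian groups and surface groups by operations under which simple-connectivity of $\hm(-,H)$ is preserved, or a direct Morse-theoretic attack extending the Yang--Mills/Harder--Narasimhan stratification arguments of \cite{A-B} and \cite{Rade} beyond the surface case. The second obstacle is the real-form comparison $\fX^0_\Gamma(K)\hookrightarrow\fX^0_\Gamma(G)$ required in the semisimple step: this is \cite{FlLaFree} and \cite{FlLaAbelian} for $\Gamma$ free or free Abelian, but for surface groups (and for exponent-canceling $\Gamma$ in general) the two spaces have different dimensions, hence are not homotopy equivalent, so one would need either a nonabelian-Hodge-type retraction onto the compact representations established only at the level of $\pi_1$, or a direct deformation argument in the spirit of Goldman's analysis of $\mathrm{PSL}(2,\R)$-character varieties.
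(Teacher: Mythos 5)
The statement you were asked to prove is presented in the paper as a \emph{conjecture}; the authors offer no proof, so there is nothing to compare your argument against, and your write-up is (as you yourself acknowledge) a reduction to open problems rather than a proof. That said, your reduction is essentially the right one and tracks the paper's proof of Theorem~\ref{thm:main}: decompose $G\cong (DG\times T)/F$, build the Serre fibration $\fX^0_\Gamma(DG)\to\fX^0_\Gamma(G)\to\fX_\Gamma(G/DG)$, collapse the long exact sequence, and reduce to showing $\pi_1(\fX^0_\Gamma(DG))\cong\pi_1(K_{DG}/DK_{DG})^r$ via $\pi_1$-surjectivity of $q$ and the factor-inclusion/covering-space lemma. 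You correctly isolate the two places where this genuinely fails to close for general exponent-canceling $\Gamma$ and general real reductive $G$: (1) simple connectivity of $\hm(\Gamma,\widetilde{DG})$, known only for $\Gamma$ free, free Abelian, or a closed surface group; and (2) the comparison between $\fX^0_\Gamma(G)$ and $\fX^0_\Gamma(K)$, which is the content of \cite{CFLO,FlLaFree,FlLaAbelian} only in the free and free Abelian cases. These are exactly the obstructions that make the statement a conjecture rather than a theorem, and your remark that the second one is why the answer must be phrased in terms of $K/DK$ rather than $G/DG$ (cf.\ the paper's final remark about $\U(p,q)$) is on target.

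Two steps in your reduction are stated more confidently than they deserve. First, the claim that Lemma~\ref{fib2} ``goes through verbatim'' is not accurate: the paper's proof of $\fX^0_\Gamma(G)\cap\fX_\Gamma(DG)=\fX^0_\Gamma(DG)$ is genuinely case-by-case (Ho--Liu's $\pi_0(\hm(\Gamma,H))\cong\pi_1(DH)$ for surface groups, maximal-torus descriptions for free Abelian $\Gamma$), and no such identification of $\pi_0$ is available for a general exponent-canceling $\Gamma$. Your substitute argument --- that every central character $\varphi\co\Gamma\to F\subset Z(DG)$ lifts to $\widetilde{DG}$ because $\Gamma/[\Gamma,\Gamma]$ is free Abelian and $\widetilde{F}\to F$ is a surjection of Abelian groups --- shows that the obstruction class of $\varphi$ vanishes, but to conclude that twisting by $\varphi$ preserves the identity component of $\fX_\Gamma(DG)$ you still need to know that components are detected by that obstruction class (or supply a path from $\rho$ to $\varphi\rho$), which is again only known in the special cases. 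Second, your appeal to Corollary~\ref{pi1onto} requires $\hm(\Gamma,G)$ to be a real algebraic set, which the paper guarantees only when $G$ is algebraic; for a general connected real reductive $G$ (the identity component of the real points) this needs the semi-algebraic refinement from \cite{RiSl}. Neither issue is fatal to the plan, but both should be flagged alongside the two main obstacles rather than absorbed into ``verbatim.''
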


\begin{remark}
{\rm The above conjecture cannot be written as 
$\pi_1(\fX_{\Gamma}^0(G))\,=\,\pi_1(G/[G,G])^{r}$ for real reductive Lie groups in
general since, as $G\,=\,\mathrm{U}(p,q)$ demonstrates, $G/DG$ is not always homotopy equivalent to 
$K/DK$ when $G$ is not complex algebraic. On the other hand, as shown in \cite{CFLO, FlLaFree, 
FlLaAbelian}, when $G$ is a real reductive Lie group and $\Gamma$ is free $($Abelian 
or non-Abelian$)$, $\fX_\Gamma(G)$ is homotopy equivalent to $\fX_\Gamma(K)$.}
\end{remark}

\appendix

\section{Serre fibrations}

Recall that a map $f\co E\to B$ has the left lifting property with respect to a map $i\co W\to Z$ if every commutative diagram 
$$\xymatrix{W \ar[r]^-{\wt{H}_0} \ar[d]^-i & E \ar[d]^-f\\ Z \ar[r]^-H & B}$$
admits a lift $Z\to E$ making the diagram commute.
The map $f$ is a \emph{Serre fibration} if it has the left lifting property with respect to the inclusions $[0,1]^{n-1}\cross \{0\} \injects [0,1]^n$ for all $n\geq 1$.  
It is a well-known fact that Serre fibrations satisfy the left lifting property not just for the inclusions $[0,1]^{n-1}\cross \{0\} \injects [0,1]^n$, but also for all inclusions $A\injects B$ where $B$ is a CW-complex, $A$ is a subcomplex, and the inclusion is a homotopy equivalence (this is one part of a model category structure on topological spaces, as constructed in many places).  We need only a very simple special case of this, namely the case of the inclusion $\{\vec{0}\} \injects [0,1]^n$, where $\vec{0} = (0, \ldots, 0)$.  This special case can be proved by a simple induction on $n$: the case $n=1$ is already part of the definition of a Serre fibration, and assuming the result for $n-1$, we factor the inclusion $\{\vec{0}\} \injects [0,1]^n$ through $[0,1]^{n-1}\cross \{0\}$ and apply the left lifting property first to $\{\vec{0}\} \injects [0,1]^{n-1}\cross \{0\}$, and then to $[0,1]^{n-1}\cross \{0\} \injects [0,1]^n$.

\begin{proposition}\label{Serre} Let $X\srm{f} Y\srm{g} Z$ be  maps between topological  spaces, and assume $f$ is surjective.  If  $f$  and $gf$ are Serre fibrations, then so is $g$.
\end{proposition}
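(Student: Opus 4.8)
The plan is to verify directly that $g$ has the left lifting property with respect to each inclusion $[0,1]^{n-1}\times\{0\}\injects[0,1]^n$, $n\geq 1$, which is the definition of a Serre fibration. So I would fix such an $n$ together with a commutative square whose top map is $H_0\co[0,1]^{n-1}\times\{0\}\to Y$ and whose bottom map is $H\co[0,1]^n\to Z$, so that $g\circ H_0 = H|_{[0,1]^{n-1}\times\{0\}}$, and then produce a map $G\co[0,1]^n\to Y$ with $G|_{[0,1]^{n-1}\times\{0\}} = H_0$ and $g\circ G = H$.

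The first step is to lift $H_0$ itself through $f$. Identifying $[0,1]^{n-1}\times\{0\}$ with the cube $[0,1]^{n-1}$ with origin $\vec{0}$, I would use surjectivity of $f$ to choose $x_0\in X$ with $f(x_0)=H_0(\vec{0})$, and then invoke the special case of the lifting property recorded in the appendix: a Serre fibration has the left lifting property with respect to $\{\vec{0}\}\injects[0,1]^{n-1}$ (for $n=1$ this inclusion is the identity of a point and there is nothing to do). Applied to the square with top map $x_0\co\{\vec{0}\}\to X$ and bottom map $H_0\co[0,1]^{n-1}\to Y$, this yields $\wt{H}_0\co[0,1]^{n-1}\times\{0\}\to X$ with $f\circ\wt{H}_0 = H_0$. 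Next, since $gf\circ\wt{H}_0 = g\circ H_0 = H|_{[0,1]^{n-1}\times\{0\}}$, I would feed the square with top map $\wt{H}_0$ and bottom map $H$ into the Serre fibration $gf$, obtaining $\wt{H}\co[0,1]^n\to X$ with $\wt{H}|_{[0,1]^{n-1}\times\{0\}} = \wt{H}_0$ and $gf\circ\wt{H} = H$. Finally, set $G := f\circ\wt{H}$; then $G|_{[0,1]^{n-1}\times\{0\}} = f\circ\wt{H}_0 = H_0$ and $g\circ G = gf\circ\wt{H} = H$, so $G$ is the required lift and $g$ is a Serre fibration.

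The step I expect to be the crux — though it is more of a conceptual point than a technical one — is getting the order of operations right. One cannot simply take an arbitrary lift of $H$ through $gf$ and push it down by $f$, since the resulting map into $Y$ need not restrict to the prescribed $H_0$ on the bottom face $[0,1]^{n-1}\times\{0\}$. The remedy is to lift $H_0$ through $f$ \emph{before} extending over the cube: this is exactly where the surjectivity hypothesis on $f$ and the pointed lifting property enter, and it forces the subsequent $gf$-lift to agree with $\wt{H}_0$ on the bottom face, hence to agree with $H_0$ there after applying $f$. Everything else is a formal diagram chase, with no point-set subtleties, since every space appearing as a domain is a cube.
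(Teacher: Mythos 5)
Your proof is correct and is essentially identical to the paper's argument: both first lift the prescribed map on the bottom face through $f$ using surjectivity and the pointed lifting property for $\{\vec{0}\}\injects[0,1]^{n-1}$, then extend over the cube via the Serre fibration $gf$, and finally compose with $f$. Only the notation differs.
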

\begin{proof} Given a commutative diagram 
$$\xymatrix{ [0,1]^{n-1}\cross \{0\} \ar[r]^-{\wt{H}_0} \ar[d]^-i & Y \ar[d]^-g\\ [0,1]^{n} \ar[r]^-H & Z,}$$
we must produce a map $[0,1]^{n}\to Y$ making the diagram commute.  Since  $f$ is surjective, we may choose a point $x_0\in X$ such that $f(x_0) = \wt{H}_0 (\vec{0})$, and since $X\to Y$ has the left lifting property with respect to $\{\vec{0}\}\injects [0,1]^{n-1}$, there exists a map $G$ making the diagram
$$\xymatrix{ \{\vec{0}\} \ar[r]^-{c_{x_0}} \ar[d] & X \ar[d]^-f\\ [0,1]^{n-1} \ar[ur]^-{G} \ar[r]^-{\wt{H}_0} & Y}$$
commute (where $c_{x_0}  (\vec{0}) = x_0$).
Since $gf$ is a Serre fibration and $g\circ f \circ G = g\circ \widetilde{H}_0 =  H\circ i$, there exists a commutative diagram
\begin{equation*} \xymatrix{ [0,1]^{n-1}\cross \{0\} \ar[r]^-{G} \ar[d]^-i & X \ar[d]^-{g\circ f} \\ [0,1]^{n} \ar[ur]^-{\wt{H}} \ar[r]^-H & Z.}\end{equation*}
The desired lift $[0,1]^n \to Y$ of $H$ is given by $f\circ \widetilde{H}$.
\end{proof}
   
\begin{corollary}\label{fibration}  Let $F$ be a finite group acting freely on    Hausdorff spaces $E$ and $B$, and let $f\co E\to B$ be an equivariant map that is also a Serre fibration.  Then the induced map $E/F\to B/F$ is a Serre fibration.
\end{corollary}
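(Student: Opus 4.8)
The plan is to deduce the corollary from Proposition~\ref{Serre}. Write $\pi_E\co E\to E/F$ and $\pi_B\co B\to B/F$ for the two quotient maps, and let $\bar f\co E/F\to B/F$ be the map induced by $f$. Equivariance of $f$ makes the square
$$\xymatrix{E \ar[r]^-f \ar[d]_-{\pi_E} & B \ar[d]^-{\pi_B}\\ E/F \ar[r]^-{\bar f} & B/F}$$
commute, so that $\bar f\circ\pi_E = \pi_B\circ f$. Since $\pi_E$ is the projection onto an orbit space it is surjective, so Proposition~\ref{Serre}, applied to the composite $E\srm{\pi_E} E/F\srm{\bar f} B/F$, reduces the problem to showing that $\pi_E$ and $\pi_B\circ f$ are both Serre fibrations.

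The key point is that a free action of a finite group $F$ on a Hausdorff space $X$ is a covering space action, so that $X\to X/F$ is a covering map. To see this, fix $x\in X$; for each $g\in F\setminus\{e\}$ we have $gx\neq x$, so by the Hausdorff property there are disjoint open sets $V_g\ni x$ and $W_g\ni gx$. Then $U := \bigcap_{g\neq e}\bigl(V_g\cap g^{-1}W_g\bigr)$ is an open neighborhood of $x$ satisfying $gU\cap U = \emptyset$ for all $g\neq e$, since $gU\subseteq W_g$ while $U\subseteq V_g$. As the quotient map $X\to X/F$ is open (as always for quotients by group actions), the image of $U$ is an open set evenly covered by $\bigsqcup_{g\in F} gU$. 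Hence $X\to X/F$ is a covering map, in particular a Serre fibration. Taking $X = E$ shows $\pi_E$ is a Serre fibration, and taking $X = B$ shows $\pi_B$ is.

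It then remains only to note that $\pi_B\circ f$ is a Serre fibration, being the composite of the Serre fibration $f$ (by hypothesis) with the Serre fibration $\pi_B$: a composite of two maps, each having the left lifting property with respect to $[0,1]^{n-1}\cross\{0\}\injects[0,1]^n$, again has that property, obtained by lifting first against the outer map and then against the inner one. With $\pi_E$ surjective and both $\pi_E$ and $\pi_B\circ f$ Serre fibrations, Proposition~\ref{Serre} yields that $\bar f\co E/F\to B/F$ is a Serre fibration, as desired. The only step I would flag as requiring any care is the covering-space fact of the second paragraph — which is exactly where the finiteness of $F$, the freeness of the action, and the Hausdorff hypotheses are used — but it is standard point-set topology, so I anticipate no genuine obstacle; everything else is a formal manipulation of lifting properties.
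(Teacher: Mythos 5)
Your proof is correct and follows essentially the same route as the paper's: both apply Proposition~\ref{Serre} to the factorization $E\to E/F\to B/F$, using that quotient maps of free finite group actions on Hausdorff spaces are covering maps (hence Serre fibrations) and that the composite agrees with $E\to B\to B/F$. The only difference is that you spell out the covering-space fact, which the paper simply cites as standard.
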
   
\begin{proof} We will apply Proposition~\ref{Serre} to the composition
$$E \maps E/F \maps B/F.$$
Quotient maps for free finite group actions on Hausdorff spaces are covering maps, and covering maps are Serre fibrations, so the first map in this sequence is a (surjective) Serre fibration.  The composite map equals the composite map
$$E \maps B \maps B/F,$$
which is a composition of Serre fibrations, hence a Serre fibration.
\end{proof}

\end{document}